\documentclass[11pt]{article}

\usepackage{authblk}

\usepackage{hyperref}
\usepackage{amsfonts}
\usepackage{graphicx}
\usepackage{tabularx}
\usepackage{array}
\usepackage[usenames,dvipsnames]{color}
\usepackage{comment}
\usepackage{amsmath}
\usepackage{amsthm}
\usepackage{amssymb}
\usepackage{fullpage}
\usepackage[dvipsnames]{xcolor}
\usepackage{tikz}
\usepackage{listings}
\usepackage{dsfont}
\usepackage{enumerate}
\usepackage{romannum}

\newtheorem{theorem}{Theorem}[section]

\newtheorem{conjecture}[theorem]{Conjecture}
\newtheorem{problem}[theorem]{Problem}
\newtheorem{question}[theorem]{Question}

\newtheorem{lemma}[theorem]{Lemma}
\newtheorem{corollary}[theorem]{Corollary}
\theoremstyle{definition}

\def\epsilon{\varepsilon}

\DeclareMathOperator{\chio}{\chi_o}
\DeclareMathOperator{\mad}{\text{mad}}
\DeclareMathOperator{\ex}{\text{Ex}}
\DeclareMathOperator{\coe}{6\Big(1+\frac{9d^2}{4}\Big)^{\frac{1}{2}}d}
\DeclareMathOperator{\bound}{\Big( 3d-1, \coe\sqrt{n}\Big)}

\title{MAD Phase Transitions in the Oriented Chromatic Number}
\author[1]{Alexander Clow\,\thanks{Supported by the Natural Sciences and Engineering Research Council of Canada (NSERC) through PGS D-601066-2025}}
\date{}

\begin{document}
\pagenumbering{arabic}

\maketitle
\begin{abstract}
    For an oriented graph $G$ the oriented chromatic number of $G$, written $\chio(G)$,
    is the least integer $t$ such that $G$ has a homomorphism to a tournament on $t$ vertices.
    The oriented chromatic number of a simple graph $H$ is the maximum oriented chromatic number over 
    all orientations of $H$.
    Borodin, Kostochka, Ne{\v{s}}et{\v{r}}il, Raspaud, and Sopena
    proved in 1999
    that for all $\epsilon>0$,
    graphs with maximum average degree less than $4-\epsilon$
    have bounded oriented chromatic number.
    This is in some sense optimal, because $1$-subdivisions of cliques demonstrate that there 
    exists graphs with maximum average degree strictly less than $4$
    and oriented chromatic number $\Omega(\sqrt{n})$.
    We prove that for every positive integer $d$,
    every
    $d$-degenerate graph $G$ with sufficiently large order satisfies 
    $\chio(G) \leq \coe 8^{d}\sqrt{n}$.
    This implies for a fixed $r\geq 4$, 
    the optimal bound for the oriented chromatic number
    of graphs with maximum average degree less than $r$ 
    is $\Theta(\sqrt{n})$.
    This complements a bound of Wood,
    who showed that for all $n$ vertex graphs $\chi_o \leq 2\Delta\sqrt{n-1}$
    where $\Delta$ is the maximum degree.
\end{abstract}

\section{Introduction}

In this paper we consider \emph{oriented graphs}, which are directed graphs without loops or multi-edges.
If $H$ is an oriented graph and $G$ is the underlying simple graph of $H$, then we say \emph{$H$ is an orientation of $G$}.
If $H$ is an orientation of $G$ and we consider a parameter of $H$ that does not depend on orientation, then suppose we are considering this parameter in $G$. 
For example, let the maximum degree of $H$ be the maximum degree of $G$.

For an integer $d$, we say a graph $G$ is \emph{$d$-degenerate} if every subgraph of $G$ has minimum degree at most $d$.
For a graph the maximum average degree of $G$ is the maximum average degree taken over all subgraphs of $G$.
Unless otherwise stated suppose all logarithms are base $2$.
For readers not familiar with standard graph theoretic notation see \cite{West1996}.

Given oriented graphs $G=(V,E)$ and $H = (V',E')$ we say a map $\phi: V \rightarrow V'$ is an \emph{oriented homomorphism} from $G$ to $H$ if for all $(u,v)\in E$, there exists an edge $(\phi(u),\phi(v))\in E'$. 
Given an oriented homomorphism $\phi$ from $G$ to $H$, we say that the partition of $V$ given by $\{\phi^{-1}(z): z \in V'\}$ is an \emph{oriented colouring} of $G$. If $H$ has at most $k$ vertices, then we say $\{\phi^{-1}(z): z \in V'\}$ is an oriented $k$-colouring of $G$, and if $G$ admits an oriented homomorphism to an oriented graph with at most $k$ vertices, then we say $G$ is oriented $k$-colourable. The \emph{oriented chromatic number} of an oriented graph $G$, denoted $\chio(G)$ is the least integer $k$ such that $G$ is oriented $k$-colourable. If $G$ is a simple graph, then the oriented chromatic number of $G$, denoted $\chio(G)$, is the maximum oriented chromatic number of any orientation of $G$.
Since throughout this paper we only consider homomorphisms of oriented graphs, we will simply write \emph{homomorphism} rather than oriented homomorphism 
going forward.

Oriented colouring was first introduced by Courcelle \cite{courcelle1994monadic} in the context of second order logic. In the same year Raspaud and Sopena \cite{raspaud1994good} proved that every planar graph has oriented chromatic number at most $80$. 
This upper bound has been yet to be improved, despite significant effort, see \cite{sopena2016homomorphisms}.
The best lower bound for the oriented chromatic number of planar graphs is $18$  \cite{marshall2015oriented}.
This lower bound came after \cite{marshall2007homomorphism, sopena2002there} each of which increased the lower bound by one,
from the lower bound of $15$. 
It is easy to observe that $15$ colours may be required
as there is a $15$ vertex planar graph where each vertex requires a different colour in any oriented colouring.

Oriented colouring various subclasses of planar graphs has also been studied.
In particular, oriented colouring planar graphs with girth $g>3$ 
has received quite a bit of attention \cite{borodin2007oriented,borodin2005oriented, borodin1999maximum,  marshall2013homomorphism, nevsetvril1997colorings, ochem2008oriented, ochem2014oriented, pinlou2009oriented}.
A principle tool in upper bounding the oriented chromatic number of these graph classes is to apply discharging.
This setting is conducive to discharging because 
Euler's formula implies that the maximum average degree of any planar graph with girth $g> 3$ is strictly less than $4-\epsilon$.
Unfortunately, this style of technique will not work to improve the upper bound of $80$ for all general planar graphs,
as there are graphs with maximum average degree less than $4$
whose oriented chromatic number are unbounded.
More on this to come.

Oriented colouring has also been considered for a variety of other classes of sparse graphs.
Graphs with Euler genus larger than $0$ (planar graphs are Euler genus $0$ graphs)
has been considered by a number of authors \cite{aravind2013forbidden,BRADSHAW2025,clow2026oriented,kostochka1997acyclic}.
The best bounds for the oriented chromatic number in terms of Euler genus
are of the form $O(g\log(g))$ while there exists graphs with oriented chromatic number $\Omega(\frac{g}{\log(g)})$ and Euler genus $g$ \cite{clow2026oriented}.
Graphs with bounded degree and/or bounded degeneracy has seen significant attention \cite{CLOW2026,das2017chromatic, duffy2020colourings, duffy2019oriented,dybizbanski2020oriented,kostochka1997acyclic,sopena1997chromatic,sopena1996note} with the best general bounds sitting at 
$(2\ln2 +o(1))\Delta^2 2^\Delta$ and $(2+o(1))\Delta d2^d$ (if degeneracy $d$ grows sublinearly in maximum degree $\Delta$).
Oriented colouring random bounded degree graphs was considered by Gunderson and Nir \cite{gunderson2022oriented}.
Graphs with bounded treewidth were considered in Sopena in \cite{sopena1997chromatic}.

As this literature review demonstrates, oriented colouring 
graphs with bounded degeneracy has seen a lot of attention.
This is because greedy colouring strategies are possible in oriented colouring, 
although they are much more difficult than in normal colouring.
To demonstrate this we point out that the $1$-subdivision (subdivide each edge exactly once) of a clique $K_t$ requires at least $t$
colours in any oriented colouring.
This is because if $u,v,w$ are vertices such that $uvw$ is a directed path of length $2$ (henceforth call a \emph{$2$-dipath}),
then $u$ and $w$ cannot receive the same colour in an oriented colouring.
To see this suppose that they did receive the same colour, call it blue,
then for whatever colour is given to $v$, call it red, red both points to and is pointed at by blue.
Since the $1$-subdivision of any graph is $2$-degenerate, this example implies the oriented chromatic number can grow like $\Omega(\sqrt{n})$
for general $2$-degenerate graphs. 

This flies in the face of several results for planar graphs with large girth \cite{borodin2007oriented, borodin1999maximum,pinlou2009oriented},
which prove upper bounds on the oriented chromatic number of all graphs with maximum average degree $\frac{12}{5}, \frac{11}{4},3,$ and $ \frac{10}{3}$.
Borodin, Kostochka, Ne{\v{s}}et{\v{r}}il, Raspaud, and Sopena \cite{borodin1999maximum}
clarify the situation by proving that for all $\epsilon>0$ there exists a constant $N_\epsilon$,
such that
graphs with maximum average degree strictly less than $4-\epsilon$
have oriented chromatic number at most $N_\epsilon$.
Considering the oriented chromatic number of general graphs with bounded maximum average degree greater than $4$ 
was not considered by these, or seemingly any other, authors.
It is clear that the maximum average degree less than $4-\epsilon$ case has seen more attention
because of its connection to planar graphs with girth at least $4$.

Significantly, 
considering the oriented chromatic number of denser graphs has been studied.
In response to a question of Erd\H{o}s in 1995
Kostochka, {\L}uczak,  Simonyi, and Sopena \cite{kostochka1997minimum}
proved there are graphs with $O(n\log(n))$ edges and oriented chromatic number $n$.
This was improved by F\"{u}redi, Horak, Pareek, and Zhu \cite{furedi1998minimal}
who showed there exists graphs with $n\log(n)-\frac{3n}{2}$ edges and oriented chromatic number $n$.
The number of edges is optimal up to the choice of lower order term.
Moreover, Wood \cite{wood2007oriented} proved all graphs with minimum degree $\log(n)$
have oriented chromatic number $\Omega(\sqrt{n})$,
showing that large oriented chromatic numbers are not just possible, but required
in graphs with more than linearly many edges.
In the same paper \cite{wood2007oriented} Wood 
proved that all graphs $G$ have $\chi_o(G)\leq 2\Delta\sqrt{n-1}$
where $\Delta$ is the maximum degree of $G$.

Our main contribution 
is to consider the oriented chromatic number of graphs with maximum average degree less than $r$,
where we allow $r\geq 4$.
Since this is essentially equivalent, up to the precise choice of $r$,
to considering graphs with bounded degeneracy
we consider graphs with degeneracy at most $d$ where $d$ is constant.

\begin{theorem}\label{Thm: main}
    Let $d\geq 2$ be an integer.
    For all $n$,
    if $G$ is a $d$-degenerate graph with order $n$,
    then
    \[
    \chio(G) \leq \coe8^{d}\sqrt{n}
    \]
\end{theorem}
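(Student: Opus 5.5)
We would prove Theorem~\ref{Thm: main} by splitting $V(G)$ according to degree. Fix a threshold $D=c\sqrt{n}$, with $c$ depending only on $d$, and let $B$ be the set of vertices of degree at least $D$. Since a $d$-degenerate graph has fewer than $dn$ edges, we get $|B|\le 2dn/D = (2d/c)\sqrt{n}$.

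Every vertex of $B$ receives its own private colour. The remaining vertices $S=V(G)\setminus B$ are coloured using a tournament on $O(8^d\sqrt n)$ further vertices, which we build as a random (or explicitly structured) tournament $T$. The homomorphism condition splits into two parts. An arc between two vertices of $B$ is automatically fine, since we can orient the arc between their private colours as required. Arcs touching $S$ must be handled by the colouring of $S$ together with the choice of arcs between private colours and $T$.

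The colouring of $S$ is done greedily along a degeneracy ordering, so each vertex has at most $d$ earlier neighbours. When we colour $v\in S$, it must satisfy two constraints.
\begin{enumerate}
\item[(i)] For each earlier neighbour $u$, the colour $\phi(v)$ must lie in the correct out- or in-neighbourhood of $\phi(u)$ in the target. There are at most $d$ such conditions.
\item[(ii)] $\phi(v)$ must differ from the colour of every vertex $w$ joined to $v$ by a $2$-dipath. It must also avoid the colour of any vertex $w$ that is joined to a common neighbour $x$ with an orientation inconsistent with $v$'s. Otherwise the arc between $\phi(x)$ and the shared colour would be forced both ways.
\end{enumerate}

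The key counting step is that each vertex of $S$ has degree less than $c\sqrt n$. So the number of colours forbidden by (ii) through neighbours $x\in S$ is at most about $d\cdot c\sqrt n$, together with the later-coloured constraints, which degeneracy handles. This is where the product structure $d\cdot\sqrt n$ with constant $\left(1+\tfrac{9d^2}{4}\right)^{1/2}$ should come from: optimising $c$ balances $|B|\approx (2d/c)\sqrt n$ against the $\approx d c\sqrt n$ forbidden colours.

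For neighbours $x\in B$ the constraint (ii) is dangerous, since $x$ can have huge degree. We handle it by letting the private colour $\beta_x$ of each $x\in B$ have a prescribed neighbourhood in $T$. Each $v\in S$ adjacent to $x$ is then forced into the out- or in-side of $\beta_x$ according to the arc direction, and no further distinctness is needed at $x$. The requirement is that $T$, together with the private colours, satisfies a property $P_d$: for any $d$ previously used colours (private or from $T$) and any sign pattern, there are many colours in $T$ with the prescribed adjacencies. Requiring this for all patterns costs a factor $2^d$. The other incidences—arcs between $v$ and its neighbours in $B$, which need not be earlier in the order—cost further factors, giving the $8^d$.

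The main obstacle is that $v\in S$ may have many neighbours in $B$, far more than $d$, so that $v$ would have to satisfy too many simultaneous adjacency constraints. To fix this, we would first reorder so that $B$ comes first in a degeneracy ordering of $G[B]$ and re-apply degeneracy to $G$. Each $v\in S$ then has at most $d$ earlier neighbours overall, and each later neighbour in $B$... but $B$ is coloured first. So instead we would apply the degeneracy of the bipartite graph between $B$ and $S$: we treat high-degree neighbours by assigning each $x\in B$'s private colour arcs to $T$ only after $S$ is coloured. This is consistent because each private colour is used by a single vertex $x$, so we simply orient $\beta_x\to\phi(v)$ or $\phi(v)\to\beta_x$ as required. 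Conflicts arise only when two neighbours $v,v'$ of $x$ share a colour but have opposite orientations to $x$, and that is exactly constraint (ii) with $x\in B$.

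That constraint involves vertices of huge degree, so we control it by colouring $S$ with pairs (colour, orientation-bit) relative to each of its at most $d$ earlier $B$-neighbours in the ordering. This encodes the direction into the colour and multiplies the palette by at most $2^d$. The remaining steps are verifying the greedy count and choosing $c$ to reach the stated constant, which is routine.
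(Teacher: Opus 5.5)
Your proposal has a genuine gap, at exactly the step you flag as the main obstacle: the high-degree neighbours.

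\textbf{What fails.} The reordering claim is false. If $B$ is placed first, a vertex $v\in S$ has all of its $B$-neighbours as earlier neighbours, up to $\min(|B|,D)$ of them, so the bound of $d$ earlier neighbours is lost.

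In the scheme you then settle on, the arcs from $\beta_x$ to $T$ are chosen only after $S$ is coloured. Colouring $v$ with an earlier neighbour $x\in B$ then forces you to avoid the colour of every already-coloured neighbour of $x$ on the other side of $x$. Since $\deg(x)$ is unbounded, this can rule out all of $T$.

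The (colour, orientation-bit) patch does not fix this. The bits are indexed by the position of $x$ in $v$'s own list of earlier $B$-neighbours. So $x$ can be the first such neighbour of $v$ and the second of $w$, and equal labels do not put $v$ and $w$ on the same side of $x$. A consistent indexing would need a labelling of $B$ that gives distinct labels to the earlier $B$-neighbours of every $S$-vertex. When $B$ contains the branch vertices of a $1$-subdivided clique, almost every pair of branch vertices must be separated. That needs $\Omega(|B|)$ labels and multiplies the palette by $2^{\Omega(|B|)}$.

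Nothing in your count actually produces $8^d$ or $(1+\frac{9d^2}{4})^{1/2}$. In the paper these come from a completely different argument. It is a minimal-counterexample proof that every $d$-degenerate graph maps to any $(3d-1,\,6(1+\frac{9d^2}{4})^{1/2}d\sqrt n)$-comprehensive tournament. The factor $1+\frac{9d^2}{4}$ bounds $|V(\mathrm{Ex}_d(G))|/n$ after low-degree vertices are replaced by $2$-dipaths. The $8^d=2\cdot 2^{3d-1}$ is the order factor of a random comprehensive tournament.

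\textbf{The missing idea.} Your route can be repaired by combining your two half-ideas, prescribed neighbourhoods and deciding arcs of $\beta_x$ late. Process $B$ and $S$ together in one degeneracy order. Prescribe the arcs from each $\beta_x$ to $T$ in advance (say randomly), \emph{except} on the at most $d$ colours of $x$'s back-neighbours in $S$; set those arcs when $x$ is reached. Then:
\begin{itemize}
\item each $v\in S$ has adjacency constraints only to its at most $d$ back-neighbours, private or not, since later $B$-neighbours are handled when they are reached;
\item constraint (ii) through a back-neighbour is already implied by constraint (i), because that neighbour's arcs are final;
\item explicit distinctness is needed only through the fewer than $D$ forward neighbours of $v$, each with at most $d-1$ other back-neighbours;
\item the overrides change the number of admissible colours by at most $d^2$.
\end{itemize}
So a random $T\cup P$ in which every set of at most $d$ vertices and every sign pattern has at least $dD+d^2$ matches in $T$ lets the greedy finish. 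This uses $O(2^d dD+dn/D)$ colours, well within the stated bound for large $n$; small $n$ is covered by $\chi_o\le n$. That would be a valid proof, and a genuinely different one from the paper's. As written, though, your proposal does not contain this step, and the step it does contain fails.
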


\begin{corollary}
    Let $r\geq 4$ and $f(n,r)$ be the maximum oriented 
    chromatic number of a graph with $n$ vertices and maximum average degree less  than $r$.
    If $r$ is constant, then $f(n,r) = \Theta(\sqrt{n})$, where the asymptotics are in $n$.
\end{corollary}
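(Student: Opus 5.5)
The corollary follows by combining Theorem~\ref{Thm: main} for the upper bound with the $1$-subdivision of a clique for the lower bound; the constants will depend on $r$ but not on $n$.

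\textbf{Upper bound.} Let $G$ have $n$ vertices and maximum average degree less than $r$. Every subgraph $H$ of $G$ has average degree less than $r$, so $H$ contains a vertex of degree less than $r$, that is, of degree at most $\lceil r\rceil-1$. Hence $G$ is $d$-degenerate with $d=\max(2,\lceil r\rceil-1)$. Taking the maximum with $2$ is harmless, since a $d'$-degenerate graph is $d$-degenerate for every $d\ge d'$, and it lets us meet the hypothesis $d\ge 2$ of Theorem~\ref{Thm: main}. That theorem then gives $\chio(G)\le C_r\sqrt n$, where $C_r$ is the constant of the theorem for this $d$. Because $r$ is fixed, $C_r$ does not depend on $n$, so $f(n,r)=O(\sqrt n)$.

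\textbf{Lower bound.} Let $S_t$ be the $1$-subdivision of $K_t$. It has $t+\binom t2$ vertices.

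First we check that $S_t$ has maximum average degree less than $4\le r$. Take any subgraph $H$ with $a$ branch vertices, $b$ subdivision vertices and at least one edge. Every edge of $H$ is incident to exactly one subdivision vertex, and each such vertex has degree at most $2$, so $|E(H)|\le 2b$. Every edge also has a branch endpoint, so $a\ge 1$. Therefore
\[
\frac{2|E(H)|}{a+b}\le \frac{4b}{a+b}<4 .
\]
Subgraphs with no edges have average degree $0$.

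Next we check that $\chio(S_t)\ge t$. Orient $S_t$ so that, for each pair of branch vertices $u,w$, the path $u\,x\,w$ through its subdivision vertex $x$ is a $2$-dipath. As noted in the introduction, the ends of a $2$-dipath must receive different colours in any oriented colouring. So all $t$ branch vertices receive distinct colours.

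Finally we handle every order $n$, not only $n$ of the form $t+\binom t2$. Given $n$, choose the largest $t$ with $t+\binom t2\le n$, so $t=\Omega(\sqrt n)$. Add $n-t-\binom t2$ isolated vertices to $S_t$. This changes neither the maximum average degree bound nor the lower bound on the oriented chromatic number. Hence $f(n,r)\ge t=\Omega(\sqrt n)$.

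Combining the two bounds gives $f(n,r)=\Theta(\sqrt n)$. The only substantive step is Theorem~\ref{Thm: main} itself; the rest is the routine translation from maximum average degree to degeneracy and the subgraph density check above.
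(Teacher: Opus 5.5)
Your proof is correct and takes the same route the paper intends: the upper bound comes from Theorem~\ref{Thm: main} once $\mad(G)<r$ is converted to $(\lceil r\rceil-1)$-degeneracy, and the lower bound comes from the $1$-subdivision of $K_t$ with each branch-to-branch path oriented as a $2$-dipath. The paper states the corollary without a written proof. Your check that the subdivision has $\mad<4$, and your padding with isolated vertices to cover every $n$, are the details it leaves implicit.
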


\begin{corollary}\label{Coro: Phase}
    Let $0 < \epsilon \leq \frac{1}{2}$. If $G$ is a graph of order $n$
    and $\chi_o(G)\geq n^{\frac{1}{2}+\epsilon}$,
    then 
    \[
    \mad(G) \geq \frac{\epsilon}{3}\log(n)-\frac{2}{3}\log\log(n) - \frac{\log(13)}{3}.
    \]
\end{corollary}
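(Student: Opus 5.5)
The plan is to derive Corollary~\ref{Coro: Phase} directly from Theorem~\ref{Thm: main}. We use the elementary fact that degeneracy is bounded by maximum average degree. Let $d$ be the degeneracy of $G$, so some subgraph $H$ of $G$ has minimum degree $d$. Then the average degree of $H$ is at least $d$, hence $d\le \mad(G)$. It therefore suffices to show that $d$ itself is at least $\frac{\epsilon}{3}\log(n)-\frac{2}{3}\log\log(n)-\frac{\log(13)}{3}$.

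First I would handle the small cases. If $d\le 1$, then $G$ is a forest, so $\chio(G)\le 3$. The hypothesis then gives $n^{1/2+\epsilon}\le 3$, so $n\le 9$. In that range
\[
\frac{\epsilon}{3}\log(n)\le \frac{\log 9}{6}<\frac{\log 13}{3},
\]
so the right-hand side is negative and the bound holds trivially. (Alternatively, one can simply apply Theorem~\ref{Thm: main} with $d=2$, since a $1$-degenerate graph is $2$-degenerate.) I would also dispose of the case $d\ge \log(n)$ separately. The right-hand side is at most $\frac{1}{6}\log(n)$ because $\epsilon\le\frac12$, so the bound is immediate. Hence we may assume $2\le d<\log(n)$, which gives $\log(d)\le\log\log(n)$.

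In the main case, Theorem~\ref{Thm: main} combined with the hypothesis gives
\[
n^{\frac12+\epsilon}\le \chio(G)\le 6\Big(1+\frac{9d^2}{4}\Big)^{\frac12} d\, 8^{d}\sqrt{n},
\]
that is, $n^{\epsilon}\le 6\big(1+\frac{9d^2}{4}\big)^{1/2}d\,8^d$. Using $1\le d^2$, we have $1+\frac{9d^2}{4}\le\frac{13d^2}{4}$, so $\big(1+\frac{9d^2}{4}\big)^{1/2}\le\frac{\sqrt{13}}{2}d$. The right-hand side is therefore at most $3\sqrt{13}\,d^2 8^d\le 13\,d^2 8^d$, since $3\sqrt{13}\approx 10.8$. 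Taking base-$2$ logarithms yields
\[
\epsilon\log(n)\le \log(13)+2\log(d)+3d\le \log(13)+2\log\log(n)+3d.
\]
Rearranging gives $d\ge\frac{\epsilon}{3}\log(n)-\frac23\log\log(n)-\frac{\log(13)}{3}$, and $\mad(G)\ge d$ finishes the proof.

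The only delicate points are bookkeeping ones: the constant $13$ must absorb the factor $6\cdot\frac{\sqrt{13}}{2}$, and the degenerate ranges $d\le1$ and $d\ge\log(n)$ must be treated separately. I expect the split at $d\ge\log(n)$, needed to replace $\log(d)$ by $\log\log(n)$, to be the only step requiring care. There is one further caveat: the abstract phrases the main theorem only for ``sufficiently large order''. If that restriction were really needed, I would apply the theorem for large $n$ and absorb small $n$ into the negative right-hand side, using the trivial bound $\chio(G)\le n$.
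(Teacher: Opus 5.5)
Your proposal is correct and follows the paper's proof essentially step for step. You pass to the degeneracy $d\le\mad(G)$, dispose of $d\le 1$ and $d\ge\log(n)$, apply Theorem~\ref{Thm: main} with $6(1+\frac{9d^2}{4})^{1/2}d\le 13d^2$, take logarithms, and use $\log(d)\le\log\log(n)$.

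Your forest case (the hypothesis forces $n\le 9$, so the right-hand side is negative) is more careful than the paper's bare claim of a contradiction. You should drop the parenthetical alternative of invoking the theorem with $d=2$, though: it only bounds the right-hand side by $2$, not by $\mad(G)$.
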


\begin{proof}
    Fix $\epsilon \in (0,\frac{1}{2}]$
    and let $G$ be a graph with $\chi_o(G)\geq n^{\frac{1}{2}+\epsilon}$.
    Let $d$ be the degeneracy of $G$.
    If $d\leq 1$, then $G$ is a forest, and it is easy to verify $\chi_o(G) \leq 3$
    a contradiction.
    So $d \geq 2$.

    Since $d \geq 2$, Theorem~\ref{Thm: main}
    implies that 
    \begin{align*}
        n^{\frac{1}{2}+\epsilon} & \leq \chi_o(G) \\
        & \leq \coe8^{d}\sqrt{n} \\
        & \leq 13d^2 8^d \sqrt{n}.
    \end{align*}
    Hence, taking logarithms and simplifying implies that
    \[
    \epsilon\log(n) \leq \log(13) + 2\log(d) + 3d
    \]
    If $d\geq \log(n)$, then we have proven the proposition.
    Otherwise $\log(d) \leq \log\log(n)$.
    From here we can solve for $d$ on the right hand side and obtain
    \[
    d\geq \frac{\epsilon}{3}\log(n) - \frac{2}{3} \log\log(n) - \frac{\log(13)}{3}.
    \]
    Hence, there is a subgraph of $G$ with minimum degree at least the right hand side of
    the previous expression.
    This completes the proof.
\end{proof}

Observe that Theorem~\ref{Thm: main} implies that 
for all graphs $\chi_o \leq 2^{O(d)}\sqrt{n}$.
Hence, we have replaced the linear $\Delta$ term in
Wood's \cite{wood2007oriented} bound $2\Delta\sqrt{n-1}$
with an exponential function of degeneracy.
Since degeneracy can be arbitrarily smaller than maximum degree
this can lead to a large advantage.
Also note that we have replaced the $\Delta$ term 
in the upper bounds for max degree and degeneracy \cite{CLOW2026,das2017chromatic}
with a $\sqrt{n}$ term.
So for graphs with maximum degree substantially greater than $\sqrt{n}$
this is again an improvement.

Next, observe that Corollary~\ref{Coro: Phase}
implies that any family of graphs
whose oriented chromatic number  is at least  $n^{\frac{1}{2}+\epsilon}$
has maximum average degree at least $(\frac{\epsilon}{3}-o(1))\log(n)$.
On the other hand F{\"u}redi,  Horak, Pareek, and Zhu \cite{furedi1998minimal}
showed there are graphs with average degree $(2-o(1))\log(n)$ and oriented chromatic number
$n$.
A simple examination of their construction yeilds that this graph in fact has 
maximum average degree $(2-o(1))\log(n)$.
So, the lower bound in Corollary~\ref{Coro: Phase} is tight up to a factor of $12$.

We structure the paper as follows.
Section~\ref{sec: Pre} establishes further definitions and ideas that will be used in the paper.
Next, we prove Theorem~\ref{Thm: main} in Section~\ref{sec: Upper}.
We conclude in Section~\ref{sec: future} with a discussion of future work,
including the outline of several open problems and the statement of a conjecture.

\section{Preliminaries}
\label{sec: Pre}

In this section we provide the definitions and notation used in the rest of the paper.
We assume the reader is familiar with standard notation in graph theory.

A graph (or oriented graph) $G = (V,E)$ is an \emph{oriented clique} if and only if $\chio(G) = |V|$. It is not hard to verify that an oriented graph $G$ is an oriented clique if and only if the directed diameter of $G$ is at most $2$. 

We borrow the following notation and definition from \cite{CLOW2026}.
Given an oriented graph $G$, a vertex $v \in V(G)$, and an ordered vertex set 
$U = \{u_1, \dots, u_k\} \subseteq N(v)$, 
we write $F(U,v,G)$ for the vector in $\{-1,1\}^k$
whose $i^{\text th}$ entry is $1$ if $(v,u_i)$ is an edge of $G$, and whose $i^{\text th}$ entry is $-1$ if $(u_i,v)$ is an edge of $G$.

Given a vertex ordering $\sigma = (v_1,\dots, v_n)$ of an oriented graph $G$, we define $\ex_d(G,\sigma)$ as follows.
Let $G_0 = G$ and suppose $G_i$ is defined.
If every vertex $u$ in $G_i$ has degree at most $2$ or at least $3d$ then let $\ex_d(G) = G_i$.
Otherwise, there exists a vertex $u$ in $G_i$ with $3 \leq \deg(u) \leq  3d-1$, let $u$ be such a vertex with minimum index in $\sigma$.
Define $G_{i+1}$ by deleting $u$,
and for all vertex pairs $v \in N^-(u)$ and $w \in N^+(u)$, then we add a degree $2$ vertex $x$ to $G_{i+1}$ where $vxw$ is a $2$-dipath.
Notice that the degree of a fixed vertex is monotone increasing with $i$.
This might lead to a degree $2$ vertex being added, then having its degree increased be larger than $2$ but less than $3d-1$.
These new vertices are not deleted.

Observe that if the set of vertices with degree between $3$ and $3d-1$ forms an independent set,
then $\ex_d(G)$ will contain only vertices with degree at least $3d$ and vertices with degree at most $2$.
Also notice that in such cases the ordering $\sigma$ does not matter. That is, for distinct orderings $\sigma$ and $\tau$,
it is always true that $\ex_d(G,\sigma)$ is isomorphic to $\ex_d(G,\tau)$.
All cases we consider will be of this type.

Letting $k$ and $t$ be positive integers
we say a graph $T$ is $(k,t)$-comprehensive if for all $U \subset V(T)$ where $|U| = k$ and any ${\vec a} \in \{-1,1\}^k$ there exist at least $t$ vertices $z \in V(T)\setminus U$ such that $F(U,z,T) = {\vec a}$.
It is sometimes convenient to take $t$ as a growing function which can lead to it 
not being an integer.
As a convenience, if $r$ is a real number, then we take 
the statement \emph{$T$ is $(k,r)$-comprehensive} to mean $T$ is $(k,\lceil r\rceil)$-comprehensive.

\section{Constructing Oriented Colourings}
\label{sec: Upper}

\subsection{A Universal Target}

Before proving Theorem~\ref{Thm: main}
we first prove the following related claim.

\begin{theorem}\label{Thm: Critical Upper}
    Let $d\geq 2$ be an integer and let $G$ be an orientation of a $d$-degenerate graph.
    If $G$ has $n$ vertices
    and $T$ is a $\bound$-comprehensive graph, then $G$ admits an oriented homomorphism to $T$.
\end{theorem}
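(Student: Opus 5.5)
The plan is to reduce the problem to colouring a small "core" of high-degree vertices, and then extend the colouring greedily to everything else using the $(3d-1)$-part of comprehensiveness. Write $k=3d-1$ and $t=\coe\sqrt{n}$. Fix a degeneracy ordering of $G$, and let $L$ be the set of vertices of degree between $3$ and $3d-1$. First I would reduce to the case where $L$ is independent: repeatedly pick a vertex of $L$ and defer it to the end of the colouring, choosing a maximal independent subset of $L$ in each round. In that case $\ex_d(G)$ is well defined and independent of the ordering, by the remark in Section~\ref{sec: Pre}. I would actually aim to process everything in a single pass of $\ex_d$ and check that the argument below never needs independence beyond what the definition of $\ex_d$ provides.

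\textbf{Extension step.} Suppose we have a homomorphism $\phi$ of $\ex_d(G)$ to $T$. Each deleted vertex $u$ has at most $3d-1$ neighbours. For every $v\in N^-(u)$ and $w\in N^+(u)$, the gadget $2$-dipath $vxw$ forces $\phi(v)\neq\phi(w)$, so no colour is required to be simultaneously an in- and an out-colour of $u$. Let $U=\phi(N(u))$, which has at most $3d-1$ distinct colours, and pad it to a set of exactly $3d-1$ vertices of $T$. Since $T$ is $(3d-1,t)$-comprehensive, some vertex $z\in V(T)\setminus U$ has exactly the required pattern $F(U,z,T)$, and we set $\phi(u)=z$. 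Degree-$\le 2$ vertices are handled the same way, with the extra care that a vertex whose degree was raised by later gadgets must be coloured at the right stage. Working backwards through the sequence $G_i$ then yields the homomorphism of $G$.

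\textbf{Colouring the core.} It remains to colour $A$, the vertices of degree at least $3d$ in $\ex_d(G)$, with the degree-$2$ gadget vertices added afterwards by the extension step. Form the auxiliary graph $C$ on $A$ whose edges are the arcs of $G[A]$ together with a \emph{conflict edge} $vw$ for every gadget $2$-dipath $vxw$. A colouring of $A$ extends to $\ex_d(G)$ if and only if:
\begin{enumerate}
\item arcs of $G[A]$ map to arcs of $T$;
\item conflict pairs receive distinct colours;
\item each gadget vertex has a valid image, which is automatic once (ii) holds, since $k\ge 2$.
\end{enumerate}
The number of edges of $C$ is bounded as follows. $G$ has at most $dn$ edges, and each deleted vertex of degree at most $3d-1$ creates at most $\lfloor (3d-1)/2\rfloor\lceil (3d-1)/2\rceil\le 9d^2/4$ conflict pairs. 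Hence $|E(C)|\le (1+9d^2/4)\,dn$ up to constants, so $C$ is $O\bigl(\sqrt{(1+9d^2/4)\,dn}\bigr)$-degenerate. This is exactly where the factor $\bigl(1+\tfrac{9d^2}{4}\bigr)^{1/2}$ in $t$ comes from. Then colour $A$ greedily in a degeneracy order of $C$. When colouring $a$, comprehensiveness provides $t$ candidate colours with the correct arc pattern to its earlier $G$-neighbours, and we discard those used by its at most $\deg$-many earlier conflict neighbours.

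\textbf{Main obstacle.} The greedy step on $A$ needs two bounds at once: $a$ must have at most $3d-1$ earlier \emph{real} neighbours in $G[A]$, so that comprehensiveness applies, and fewer than $t$ earlier \emph{conflict} neighbours. A $d$-degeneracy order of $G$ controls the first, and a degeneracy order of $C$ controls the second, but a single ordering need not control both. My first attempt is to order by the degeneracy of $C\cup G[A]$, which bounds the total number of earlier neighbours by $O(d\sqrt{(1+9d^2/4)n})$; this is where the extra factor $d$ and the constant $6$ enter.

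With that ordering, the real back-neighbours may still exceed $3d-1$. To fix this I would split $A$ into the vertices of large $C$-degree (more than roughly $t/2$), of which there are only $O(\sqrt n)$, and the rest. The rest I colour in the $d$-degenerate order of $G$, where each vertex sees at most $d$ earlier real neighbours and fewer than $t/2$ conflicts. The few high-conflict vertices are coloured first in the $d$-degenerate order of $G$, with the conflicts among them being at most their number, $O(\sqrt n)$, which is below $t$. Verifying that the constants fit inside $\coe\sqrt{n}$ is the calculation I expect to need the most care.
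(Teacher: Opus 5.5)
Your backward extension through the $G_i$ is fine. The core-colouring step, however, has a genuine gap, and your final fix does not close it.

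\textbf{The split ordering fails.} After all high-conflict vertices are coloured, you colour the low-conflict ones in a $d$-degenerate order of $G$. That order bounds only the earlier neighbours \emph{inside} the low-conflict set. A low-conflict vertex $a$ sees \emph{all} of its high-conflict neighbours already coloured. Being low-conflict only means $\deg_C(a)\le t/2$, so nothing stops $a$ from having $3d$ or more real neighbours among the high-conflict vertices. For example, $G[A]$ may contain a star $K_{1,3d}$ centred at $a$ with high-conflict leaves, which is compatible with $d$-degeneracy. Then $(3d-1,t)$-comprehensiveness does not guarantee a vertex of $T$ with the required pattern.

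\textbf{$C$ misses constraints from $G[A]$ itself.} If $wxw'$ is a $2$-dipath in $G[A]$ and $x$ is coloured after $w$ and $w'$, you need $\phi(w)\neq\phi(w')$, or no colour for $x$ has the required pattern. So when colouring a vertex you must also avoid the colours of every coloured vertex joined to it by a $2$-dipath through an uncoloured real neighbour. These pairs are not edges of $C$, and adding them destroys sparsity: there can be $\sum_x \deg^-_{G[A]}(x)\deg^+_{G[A]}(x)$ of them, which is quadratic in $|A|$ for a hub. Their number per vertex is not controlled by your budget of $t$.

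\textbf{The reduction to an independent $L$ is unjustified.} For general $G$, $\ex_d(G)$ has degree-$\le 2$ vertices that are not gadgets with both ends in $A$. The ``core plus gadgets'' picture that your step relies on is therefore not available.

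\textbf{The missing idea.} The paper avoids any global ordering by working with a smallest counterexample and recolouring one vertex at a time. Independence of the low-degree vertices is Lemma~\ref{Lemma: counter-low-neighbours}: delete an edge from a low vertex to a vertex of degree at most $3d$, colour by minimality, and recolour the two ends.

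The heart of the argument is Lemma~\ref{Lemma: The Fun Bit}. Suppose a core vertex $v$ with $k\le 3d-1$ core neighbours had fewer than $t2^{3d-1-k}$ distinct partners $w\in A$ sharing a degree-$2$ neighbour with it. Delete an arc from $v$ to a low vertex, colour the resulting graph by minimality (passing to $\ex_d$ via Lemma~\ref{Lemma: extendedEquiv}), and recolour \emph{only} $v$. Its $k$ core neighbours already carry consistent colours, and Lemma~\ref{Lemma: comp->} supplies $t2^{3d-1-k}$ candidates with the right pattern. One of these avoids every partner's colour, and the degree-$2$ vertices at $v$ then extend trivially.

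Hence in a minimal counterexample every such $v$ has degree at least $t$ in the twin-free graph $H$, which satisfies $\sqrt{n(H)}\le|A|$ and $n(H)\le(1+\frac{9d^2}{4})n$. Since $\mad(H)<2d$, more than $|A|/3$ core vertices have at most $3d-1$ core neighbours. This forces average degree at least $t/(3\sqrt{n(H)})\ge 2d$, a contradiction. That counting, not the degeneracy of a conflict graph, is where the constant $6(1+\frac{9d^2}{4})^{1/2}d$ comes from.
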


To prove Theorem~\ref{Thm: Critical Upper} we need to establish a series of lemmas.
Some of these are well known.
We include all lemmas and their proofs for completeness.

Throughout this subsection it will be convenient 
to consider smallest counterexamples to Theorem~\ref{Thm: Critical Upper}.
The specific integer $d\geq 2$ at hand is always assumed to be fixed but arbitrary unless otherwise stated,
and one can view $d$ as being fixed at all times.
Given a $d$-degenerate oriented graph $G$ that is clear from context
we let $n$ be the number of vertices in $G$.
When $G$ might not be clear from context we write $n(G)$ for the number of vertices in $G$.
Now, for $d$-degenerate oriented graphs $G$ and $H$ we say $G$ is smaller than $H$ for the purposes of a 
smallest counterexample argument if $n(G) < n(H)$ or if $G$ is a strict subgraph of $H$.

\begin{lemma}\label{Lemma: MAD}
    If $G$ is a $d$-degenerate graph, then $\mad(G) < 2d$.
\end{lemma}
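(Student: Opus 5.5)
The plan is to prove the bound subgraph by subgraph, through an edge count obtained from a degeneracy ordering. Recall that $\mad(G)$ is the maximum of $2|E(H)|/|V(H)|$ over all subgraphs $H$ of $G$ with at least one vertex. It therefore suffices to show that every nonempty subgraph $H$ of $G$ satisfies $|E(H)| < d\,|V(H)|$, since then $2|E(H)|/|V(H)| < 2d$.

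First, $H$ is itself $d$-degenerate, because every subgraph of $H$ is a subgraph of $G$ and so has minimum degree at most $d$. Next, build an elimination ordering of $H$. Set $H_m = H$ with $m = |V(H)|$, and repeatedly delete a vertex $x_i$ of minimum degree in the current graph $H_i$ to obtain $H_{i-1}$. By degeneracy, $x_i$ has degree at most $d$ in $H_i$. Every edge of $H$ is removed exactly once, namely at the step where its first endpoint is deleted, so summing over the deletions gives $|E(H)| \le d\,m$.

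For strict inequality, look at the last vertex deleted, which lies in a one-vertex graph and so removes $0 \le d-1$ edges. This gives $|E(H)| \le d(m-1) < dm$, as $m \ge 1$. The bound $2|E(H)|/|V(H)| < 2d$ then holds for every subgraph $H$, hence for the maximum.

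Nothing here is a real obstacle. The only point needing care is the strictness of the inequality, which comes from the final vertex contributing no edges; the empty graph is excluded by the convention that average degree is taken over nonempty subgraphs.
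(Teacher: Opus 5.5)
Your proof is correct and takes essentially the paper's route: a degeneracy ordering in which each vertex is charged at most $d$ edges, applied to every subgraph, gives $|E(H)|<d|V(H)|$. Your strictness step (the last vertex contributes no edges, so $|E(H)|\le d(|V(H)|-1)$) is also slightly more robust than the paper's intermediate bound $dn-\binom{d+1}{2}$, which can fail for subgraphs with fewer than $d$ vertices, for example $H=K_{d-1}$.
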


\begin{proof}
    Let $G$ be a $d$-degenerate graph.
    Then $G$ admits a vertex ordering $(v_1,\dots, v_n)$ such that for all $i$,
    $|N(v_i)\cap \{v_1,\dots, v_{i-1}\}|\leq d$.
    Immediately, this gives that 
    \[
    |E(G)| = \sum_{i=1}^n |N(v_i)\cap \{v_1,\dots, v_{i-1}\}| \leq dn-\binom{d+1}{2}.
    \]
    Hence, by the handshaking lemma the average degree of a $d$-degenerate graph is equal to
    \[
    \frac{2|E(G)|}{n} < \frac{2dn}{n} = 2d.
    \]
    Since every subgraph of a $d$-degenerate graph is $d$-degenerate we conclude that $\mad(G) < 2d$.
\end{proof}

\begin{lemma}\label{Lemma: comp->}
    Let $k\geq 2$ and $t\geq 1$.
    If $T$ is a $(k,t)$-comprehensive graph, then $T$ is also $(k-1,2t)$-comprehensive.
\end{lemma}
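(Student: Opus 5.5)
The plan is to fix an arbitrary $U \subset V(T)$ with $|U| = k-1$ and an arbitrary $\vec a \in \{-1,1\}^{k-1}$, and to exhibit at least $2t$ vertices $z \in V(T)\setminus U$ with $F(U,z,T) = \vec a$. The idea is to enlarge $U$ by one extra vertex and apply $(k,t)$-comprehensiveness twice, once for each possible orientation of the edge to the new vertex.

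First we need a vertex to add. Apply $(k,t)$-comprehensiveness to a $k$-set containing $U$. This presupposes that $|V(T)| \geq k$; the degenerate case of $T$ having exactly $k-1$ vertices is vacuous or excluded, since comprehensiveness with $t\geq 1$ already forces $T$ to have more than $k$ vertices. So pick any vertex $w \in V(T)\setminus U$ and set $U' = U \cup \{w\}$, ordered with $w$ last, so that $|U'| = k$.

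Next, consider the two extended vectors $\vec a^{+} = (\vec a, 1)$ and $\vec a^{-} = (\vec a, -1)$ in $\{-1,1\}^k$. By $(k,t)$-comprehensiveness there are sets $Z^+$ and $Z^-$, each of size at least $t$, of vertices $z \notin U'$ with $F(U',z,T)=\vec a^{+}$ and $F(U',z,T)=\vec a^{-}$ respectively. Every $z$ in $Z^+$ has $(z,w)$ as an edge, while every $z$ in $Z^-$ has $(w,z)$ as an edge. Since $T$ is an oriented graph with no multi-edges, no vertex can satisfy both, so $Z^+ \cap Z^- = \emptyset$. Restricting the vectors to the first $k-1$ coordinates shows that every $z \in Z^+\cup Z^-$ satisfies $F(U,z,T)=\vec a$, and each such $z$ lies outside $U$. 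Hence there are at least $2t$ valid vertices, which is exactly $(k-1,2t)$-comprehensiveness.

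The only step needing care is the disjointness of $Z^+$ and $Z^-$, which rests on the no-multi-edge (no $2$-cycle) property of oriented graphs. There is also a small convention point: the definition of $F(U,z,T)$ requires $U \subseteq N(z)$, so it is implicit that each $z$ counted is adjacent to all of $U$. This holds automatically because the vectors take values in $\{-1,1\}$ only, so every coordinate corresponds to an actual edge between $z$ and the corresponding vertex of $U$.
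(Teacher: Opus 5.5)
Your proof is correct and is essentially the paper's argument: add one vertex $u\notin U$, apply $(k,t)$-comprehensiveness to both $(\vec a,1)$ and $(\vec a,-1)$, and observe that the two resulting sets of at least $t$ vertices are disjoint and all restrict to $\vec a$ on $U$. One small quibble concerns your justification for the existence of $u$: when $|V(T)|=k-1$ the hypothesis holds vacuously while the conclusion fails, so one simply has to assume $|V(T)|\ge k$, as the paper tacitly does.
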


\begin{proof}
    Suppose that $T$ is a $(k,t)$-comprehensive graph where $k\geq 2$. Let $A = \{v_1,\dots, v_{k-1}\} \subset V(T)$ be fixed but arbitrary and let ${\vec a} = (x_1,\dots, x_{k-1}) \in \{-1,1\}^{k-1}$. Next, let $u \in V(T)\setminus A$ and $B = \{v_1,\dots, v_{k-1},u\}$. As $T$ is $(k,t)$-comprehensive there are at least $t$ vertices $z$ such that $F(B,z,T) = {\vec a}_+$, and at least $t$ vertices $w$ such that $F(B,w,T) = {\vec a}_-$, where ${\vec a}_+ = (x_1,\dots, x_{k-1},1)$ and ${\vec a}_- = (x_1,\dots, x_{k-1},-1)$. For any such $z$ or $w$, $F(A,z,T)=F(A,w,T) = {\vec a}$, so there are at least $2t$ vertices $z$ such that $F(A,z,T)= {\vec a}$. As our choice of $A$ and ${\vec a}$ was arbitrary we conclude that $T$ is $(k-1,2t)$-comprehensive as required.
\end{proof}

\begin{lemma}\label{Lemma: counter-no-oneway}
    If $G$ is a smallest counterexample to Theorem~\ref{Thm: Critical Upper},
    then for all vertices $v \in V(G)$ with degree at most $3d-1$,
    $\deg(v)^+,\deg^-(v)\geq 1$.
\end{lemma}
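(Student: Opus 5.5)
We argue by contradiction, using the minimality of $G$ together with the comprehensiveness of $T$. Suppose $v \in V(G)$ has $\deg(v) \leq 3d-1$ and, without loss of generality, $\deg^-(v) = 0$, so every edge at $v$ points out of $v$. The case $\deg^+(v)=0$ is symmetric, since $(k,t)$-comprehensiveness is invariant under reversing all the signs in $\vec a$. If $\deg(v)=0$, deleting $v$ gives a smaller graph that maps to $T$, and $v$ can be sent to any vertex of $T$. So assume $\deg(v)\geq 1$.

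The natural reduction is to delete $v$. The graph $G-v$ is still $d$-degenerate, has $n-1$ vertices, and is smaller than $G$ in the order fixed before Lemma~\ref{Lemma: MAD}. One subtlety: the hypothesis on $T$ depends on $n$, because $T$ is assumed $\bound$-comprehensive with the $n$ of $G$. The bound $\coe\sqrt{n}$ is increasing in $n$, so $T$ is also $\bigl(3d-1, \coe\sqrt{n-1}\bigr)$-comprehensive. Hence $T$ satisfies the hypothesis of Theorem~\ref{Thm: Critical Upper} for $G-v$, and minimality gives a homomorphism $\phi\colon G-v\to T$.

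To extend $\phi$ to $v$, let $U=\phi(N(v))$ be the set of images of the out-neighbours of $v$, so $1\le |U|\le \deg(v)\le 3d-1$. Distinct neighbours may share an image, but that is harmless: we only need a vertex $z\in V(T)\setminus U$ with $(z,u)\in E(T)$ for every $u\in U$. This is precisely $F(U,z,T)=(1,\dots,1)$.

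Lemma~\ref{Lemma: comp->}, applied $3d-1-|U|$ times, shows $T$ is $(|U|, t')$-comprehensive with $t'\ge 1$. If $|U|<3d-1$ one can instead pad $U$ with arbitrary extra vertices of $T$, which needs $|V(T)|\ge 3d-1$; this holds since comprehensiveness forces $T$ to have more than $3d-1$ vertices. So some $z$ exists with the required all-ones pattern on $U$. Setting $\phi(v)=z$ gives a homomorphism $G\to T$, because every edge $(v,w)$ maps to $(z,\phi(w))\in E(T)$. This contradicts $G$ being a counterexample.

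The only delicate points are the monotonicity in $n$ needed to invoke minimality and the handling of repeated images in $U$, and both are routine. Notably, this argument never uses that $v$ has a one-way orientation. Any vertex of degree at most $3d-1$ can be extended this way, which suggests the intended proof may instead delete $v$ in a way that changes degrees elsewhere, but the direct argument above suffices for the lemma as stated.
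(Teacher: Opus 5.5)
Your proof is correct and is essentially the paper's own argument. Delete $v$, use minimality to get $\phi\colon G-v\to T$, and send $v$ to a vertex of $T$ realizing the constant pattern on $\phi(N(v))$. You are more careful than the paper on three points:
\begin{itemize}
\item the monotonicity in $n$ needed to invoke minimality;
\item the case $|\phi(N(v))|<3d-1$;
\item the sign: with the paper's definition of $F$, an out-only vertex needs the all-$1$ pattern, not the all-$(-1)$ pattern written there.
\end{itemize}
One caveat on ``comprehensiveness forces $|V(T)|>3d-1$'': a graph with fewer than $3d-1$ vertices is vacuously $(3d-1,t)$-comprehensive. So this only holds under a non-degeneracy assumption that the paper also makes tacitly. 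Without it, Theorem~\ref{Thm: Critical Upper} already fails for $T$ a single vertex and $G$ a single arc.

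Your closing remark, however, is false, and it matters. The one-way orientation is used exactly at the step you call harmless. Repeated images in $\phi(N(v))$ are harmless only because every neighbour of $v$ demands the same direction. Suppose instead $v$ had an in-neighbour $u$ and an out-neighbour $w$ with $\phi(u)=\phi(w)$. Nothing prevents this, since $uvw$ is a $2$-dipath in $G$ but not in $G-v$. Then setting $\phi(v)=z$ would require both $(\phi(u),z)$ and $(z,\phi(u))$ to be arcs of the oriented graph $T$, which is impossible.

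The remark also leads to a contradiction. If every vertex of degree at most $3d-1$ could be extended this way, then by $d$-degeneracy no smallest counterexample could exist, even for targets that are merely $(3d-1,1)$-comprehensive. By Lemma~\ref{Lemma: exists comprehensive}, every orientation of a $2$-degenerate graph would then map to a single tournament of bounded order. This contradicts the $1$-subdivisions of cliques discussed in the introduction.

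Handling this collision for vertices with both in- and out-neighbours is precisely the job of $\ex_d$ and Lemma~\ref{Lemma: extendedEquiv}. They replace such a vertex by a degree-$2$ vertex on each in/out pair, which forces the homomorphism of the reduced graph to separate the images of $u$ and $w$.
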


\begin{proof}
    Suppose $G$ is a smallest counterexample and
    for contradiction suppose $v$ is a vertex with degree at most $3d-1$ and $\deg^-(v) = 0$.
    The case $\deg^+(v) = 0$ follows by a similar argument.
    Since $G$ is a counterexample there is a $\bound$-comprehensive graph $T$ such that
    $G$ has no homomorphism to $T$.
    Let $T$ be such a graph.

    Since $G$ is a smallest counterexample $H = G -v$ is not a counterexample.
    Since $n(H)< n(G)$, we note that $H$ has a homomorphism to $T$.
    Let $\phi$ be such a homomorphism
    and let $A = \{\phi(u): u \in N(v)\}$.

    Notice that $|A|\leq \deg(v)\leq 3d-1$.
    Since $T$ is $(3d-1,1)$-comprehensive there is a vertex $x$ in $T$
    such that $\vec{v} = F(A,x,T)$ where $\vec{v}$ is the all $-1$ vector.
    The reader can easily verify that by letting $\phi(v)=x$, $\phi$ is a homomorphism from $G$ to $T$.
    But this is a contradiction, since $G$ has no homomorphism to $T$, completing the proof.
\end{proof}

\begin{lemma}\label{Lemma: counter-low-neighbours}
    If $G$ is a smallest counterexample to Theorem~\ref{Thm: Critical Upper},
    then for all vertices $v \in V(G)$ with degree at most $3d-1$,
    every neighbour $u$ of $v$ has $\deg(u)\geq 3d+1$.
\end{lemma}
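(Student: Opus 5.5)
We argue by contradiction, in the same style as Lemma~\ref{Lemma: counter-no-oneway}. Let $G$ be a smallest counterexample and $T$ a $\bound$-comprehensive graph with no homomorphism from $G$ to $T$. Suppose $v$ has $\deg(v)\leq 3d-1$ and some neighbour $u$ has $\deg(u)\leq 3d$. We delete the edge $uv$ to get $H=G-uv$. This is a strict subgraph of $G$, hence smaller, and still $d$-degenerate, so it has a homomorphism $\phi$ to $T$. The plan is to recolour one endpoint so that the edge $uv$ is also respected, which contradicts the choice of $G$.

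The key step is to choose which endpoint to recolour and to set up the constraint set. First we recolour $v$. Let $A=\{\phi(w): w\in N_G(v)\setminus\{u\}\}\cup\{\phi(u)\}$, so $|A|\leq \deg(v)\leq 3d-1$. Give each element of $A$ the sign demanded by the orientations of the edges from $v$. If two neighbours of $v$ share a colour $c$, the edges to them may demand conflicting signs; handling this is the main obstacle.

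To handle it, we do not recolour only $v$. We first use minimality (or Lemma~\ref{Lemma: counter-no-oneway}) to remove $v$ itself: colour $G-v$ by $\phi$, then pick $\phi(v)$ via $(3d-1,1)$-comprehensiveness applied to the set $A=\{\phi(w):w\in N(v)\}$. A conflict arises only if two neighbours $w_1,w_2$ of $v$ with opposite orientations, that is, a $2$-dipath through $v$, receive the same colour. To avoid this, we colour $G-v$ with the additional constraint that endpoints of $2$-dipaths through $v$ are distinct. We encode this by replacing $v$ with degree-$2$ vertices as in the construction of $\ex_d$: for each $w^-\in N^-(v)$ and $w^+\in N^+(v)$, add a vertex $x$ with $w^-xw^+$ a $2$-dipath. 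Any homomorphism of this graph forces $\phi(w^-)\neq\phi(w^+)$. After that, a colour for $v$ exists by comprehensiveness with $k=3d-1$.

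The resulting graph $G'$ must be smaller than $G$ and still $d$-degenerate. Degeneracy holds since the added vertices have degree $2\leq d$ and can be taken first in the ordering. The size, however, may grow: we remove one vertex and add up to $\deg^-(v)\deg^+(v)$ vertices. This is exactly where the hypothesis on $u$ is used. Since $u$ has low degree, we can afford to delete $u$ as well and reinsert both afterwards. We colour $G-\{u,v\}$ plus the $2$-dipath gadgets, where the size bound $(\ldots)\sqrt{n}$ in the comprehensiveness parameter absorbs the increase in $n$. Concretely, the target is $(3d-1,\,c\sqrt{n'})$-comprehensive for the new order $n'$, which is at most a constant factor times $n$. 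That factor is absorbed by the constant $\coe$ through Lemma~\ref{Lemma: comp->}. We then colour $u$ using $(3d-1,\cdot)$-comprehensiveness, where one extra unit of slack is enough because $u$ has at most $3d$ neighbours, one of which, $v$, is still uncoloured. Finally we colour $v$ as above. I expect the main work to be checking this bookkeeping: that the modified graph is genuinely "smaller" in the paper's order, or that $t$-multiplicity lets us avoid the one or two forbidden colours.
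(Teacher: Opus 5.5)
\paragraph{Gap: the gadget graph is not smaller than $G$.}
Your opening move is the right one: delete $uv$ and colour $H=G-uv$ by minimality. You then abandon it, delete $u$ and $v$, and insert $2$-dipath gadgets, and that step fails.

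\emph{Minimality does not apply.} The gadget graph $G'$ is neither a strict subgraph of $G$ nor of smaller order. You remove two vertices but add one vertex for each pair in $N^-(v)\times N^+(v)$ not involving $u$. For example, with $d=2$, $\deg(v)=5$, $\deg^-(v)=2$ and $u\in N^+(v)$, you add four vertices and the order goes up by two. So minimality of $G$ tells you nothing about $G'$.

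\emph{The growth in order is not absorbed by the constant.} Applying Theorem~\ref{Thm: Critical Upper} to $G'$ would require $T$ to be $(3d-1,c\sqrt{n(G')})$-comprehensive with $n(G')>n$. The given $T$ need not be. Lemma~\ref{Lemma: comp->} cannot supply this either: it lowers $k$ in exchange for a larger $t$, and never raises $t$ at $k=3d-1$.

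\emph{Colouring $u$ afterwards can also fail.} With $u$ deleted and no gadgets placed for $u$, nothing forces the endpoints of $2$-dipaths through $u$ to get distinct colours. Moreover, the colours $u$ must avoid (those on the other neighbours of $v$) number up to $3d-2$, not ``one or two''.

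\paragraph{The missing observation.}
In $H=G-uv$ both $u$ and $v$ are still present. So every $2$-dipath through $u$ avoiding $v$, and every $2$-dipath through $v$ avoiding $u$, already lies in $H$, and its endpoints already receive distinct colours. After uncolouring $u$ and $v$, the only possible conflict is between the colour of $u$ and the colours on $N(v)\setminus\{u\}$. You resolve it with the multiplicity in comprehensiveness, as follows.

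\begin{enumerate}
\item Let $A$ be the set of colours on $N(u)\setminus\{v\}$. Then $|A|\le 3d-1$; this is exactly where $\deg(u)\le 3d$ is used.
\item Let $B$ be the set of colours on $N(v)\setminus\{u\}$. Then $|B|\le 3d-2$.
\item Write $c=6(1+9d^2/4)^{1/2}d$, the constant in the comprehensiveness parameter $c\sqrt{n}$. Since $c\sqrt{n}\ge 3d-1$, $T$ is $(3d-1,3d-1)$-comprehensive. Hence there are at least $3d-1$ admissible colours for $u$ with respect to $A$, and at least one of them, call it $x$, lies outside $B$. Set $\psi(u)=x$.
\item Because $x\notin B$, the required sign vector on $B\cup\{x\}$ is well defined. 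This set has size at most $3d-1$, so $(3d-1,1)$-comprehensiveness gives a colour for $v$.
\end{enumerate}

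This produces a homomorphism $G\to T$ with no auxiliary graph and no change of $n$, which is the paper's argument.
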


\begin{proof}
    Let $G$ be a smallest counterexample to Theorem~\ref{Thm: Critical Upper},
    and for contradiction suppose that there exists two vertices $u$ and $v$ in $G$ such that
    $u$ and $v$ are adjacent in $G$, and we have $\deg(v)\leq 3d-1$ as well as $\deg(u)\leq 3d$.
    Let $T$ be a fixed but arbitrary $\bound$-comprehensive graph.
    Let $H = G - (u,v)$.
    By the minimality of $G$, $H$ is not a counterexample,
    and trivially $n(H) = n(G)$.
    Thus, $H$ admits a homomorphism $\phi$ to $T$.
    Let $\phi$ be such a homomorphism.

    We define a new map $\psi: V(G) \rightarrow V(T)$.
    For all $w \in V(G)\setminus \{u,v\}$ let $\psi(w)=\phi(w)$.
    Since $\phi$ is a homomorphism from $H$ to $G$, 
    without having defined $\psi(u)$ or $\psi(v)$,
    $\psi$ is a homomorphism from $G-\{u,v\}$ to $T$.

    Let $A = \{a_1,\dots, a_{|A|}\}$ be an ordering of $\{\psi(w): w \in N(u)\setminus \{v\}\}$ and let $B = \{b_1,\dots, b_{|B|}\}$ be an ordering of $\{\psi(z): z \in N(v)\setminus \{u\}\}$.
    We define a vector $\vec{w}$ as follows: let $\vec{w} \in \{-1,1\}^{|A|}$ with $i^{th}$ entry $1$ if $(w,u)\in E$, where $\psi(w) = a_i$, and $i^{th}$ entry $-1$ otherwise.
    We note that for all vertices $w,z \in V(G)\setminus \{u,v\}$ such that 
    $wuz$ is a $2$-dipaths in $G$, $\psi(w)\neq \psi(z)$,
    since $wuz$ is a $2$-dipaths in $H$, and every oriented colouring is a $2$-dipath colouring.
    Thus, $\vec{w}$ is well defined.

    Since $\deg(u)\leq (3d-1)+1$ in $G$, and $u$ and $v$ are adjacent in $G$, $|A|\leq \deg(u) - 1\leq (3d-1)$.
    Given $T$ is $\bound$-comprehensive, $T$ is $(3d-1,3d-1)$-comprehensive.
    Hence, there exists at least $3d-1$ vertices $x$ in $T$ such that $\vec{w} = F(A,x,T)$.
    Since $|B|\leq deg(v)-1 < (3d-1)$ it follows that there exists an $x$ in $T$ with $\vec{w} = F(A,x,T)$ and $x \notin B$.
    Let $\psi(u)=x$ for such an $x$.

    Then define $B' = (b_1,\dots, b_{|B|}, b_{|B|+1})$ where we write $x = b_{|A|+1}$.
    Next, let $\vec{z} \in \{-1,1\}^{|B|+1}$ with $i^{th}$ (for $i \leq |B|+1)$ entry $1$ if $(z,v)\in E$, where $\psi(z) = b_i$, and $i^{th}$ entry $-1$ otherwise.
    As with $\vec{w}$ the vector $\vec{z}$ is well defined.
    Since $T$ is $((3d-1),1)$-comprehensive and $|B'|\leq \deg(v)\leq (3d-1)$ there is a vertex $y$ in $T$ such that 
    $\vec{z} = F(B',y,T)$.
    Letting $\psi(v) = y$ the reader can easily verify $\psi$ is a homomorphism from $G$ to $T$.
    Since $T$ was arbitrary
    this contradicts $G$ being a smallest counterexample.
    This concludes the proof.
\end{proof}

\begin{lemma}\label{Lemma: extendedEquiv}
    Let $T$ be a $(3d-1,1)$-comprehensive graph,
    let $G$ be an oriented graph, and let $\sigma$ be a vertex ordering of $G$.
    If the vertices of $G$ with degree at most $3d-1$ form an independent set,
    and no vertex with degree at most $3d-1$ is a source or a sink,
    then $G$ has a homomorphism to $T$ if and only if 
    $\ex_d(G,\sigma)$ has a homomorphism to $T$.
\end{lemma}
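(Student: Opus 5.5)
Since the vertices of degree at most $3d-1$ form an independent set, I will first check that $\ex_d(G,\sigma)$ is obtained in one pass: every deleted vertex $u$ has $3\le\deg(u)\le 3d-1$, and all its neighbours are untouched original vertices of degree at least $3d$. Deleting $u$ lowers a neighbour's degree by one, but the new $2$-dipaths raise it again by at least one, because $u$ is neither a source nor a sink and so $N^-(u)$ and $N^+(u)$ are both nonempty. So no high-degree vertex drops into the range $[3,3d-1]$. The added vertices have degree $2$ and are never deleted. Hence $\ex_d(G,\sigma)$ is $G$ with each vertex $u$ of degree in $[3,3d-1]$ replaced by a gadget: for every pair $v\in N^-(u)$, $w\in N^+(u)$, a new vertex $x_{vw}$ with $v x_{vw} w$ a $2$-dipath. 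Write $L$ for this set of deleted vertices; the two graphs share the vertex set $V(G)\setminus L$, on which they induce the same arcs. I will prove both directions of the equivalence one vertex $u\in L$ at a time. Since the elements of $L$ are pairwise nonadjacent, the replacements do not interact.

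For the forward direction, let $\phi:G\to T$ be a homomorphism and fix $u\in L$. For $v\in N^-(u)$ and $w\in N^+(u)$, the path $vuw$ is a $2$-dipath, so $\phi(v)\neq\phi(w)$, because oriented colourings separate the ends of $2$-dipaths. Moreover $(\phi(v),\phi(u))$ and $(\phi(u),\phi(w))$ are arcs of $T$. Setting $\phi(x_{vw})=\phi(u)$ therefore gives a homomorphism of each gadget path, and leaving $\phi$ unchanged elsewhere gives a homomorphism $\ex_d(G,\sigma)\to T$.

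For the converse, let $\psi:\ex_d(G,\sigma)\to T$ be a homomorphism and restrict it to $V(G)\setminus L$. It remains to colour each $u\in L$. Let $A=\psi(N(u))$. The main point is that $\vec a$, the vector recording for each colour in $A$ whether $u$ should receive an arc from it ($-1$) or send an arc to it ($+1$), is well defined: this requires $\psi(v)\neq\psi(w)$ for all $v\in N^-(u)$ and $w\in N^+(u)$. In $\ex_d(G,\sigma)$ the path $v x_{vw} w$ is a $2$-dipath, so its ends receive distinct colours under $\psi$. Colour collisions within $N^-(u)$, or within $N^+(u)$, are harmless, since they demand the same sign. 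Now $|A|\le \deg(u)\le 3d-1$. Pad $A$ with arbitrary extra vertices of $T$ up to size $3d-1$, assuming $|V(T)|$ is large enough, which holds for the targets in question. Because $T$ is $(3d-1,1)$-comprehensive, there is a vertex $z\notin A$ with $F(A,z,T)$ agreeing with $\vec a$ on the original coordinates. Set $\psi(u)=z$. Every arc at $u$ then maps to an arc of $T$, and the arcs among $V(G)\setminus L$ are already respected. This yields $G\to T$.

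I expect the main obstacle to be the structural claim in the first paragraph, namely that the iterative definition of $\ex_d$ collapses to this simultaneous gadget replacement. That is exactly where the independence of $L$ and the no-source/no-sink hypothesis are used. A smaller point is checking that vertices of degree at most $2$ (including degree-$1$ and degree-$2$ vertices of $G$ itself) are untouched. I will handle this directly from the definition, since only vertices of degree in $[3,3d-1]$ are ever deleted.
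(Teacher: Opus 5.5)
Your proof is correct and follows essentially the paper's route. You first show that, under the independence and no-source/no-sink hypotheses, $\ex_d(G,\sigma)$ is the simultaneous replacement of each vertex of degree in $[3,3d-1]$ by its $2$-dipath gadget, and then transfer homomorphisms both ways using that ends of $2$-dipaths get distinct colours and that $T$ is $(3d-1,1)$-comprehensive. The only differences are small: in the forward direction you map each gadget vertex $x_{vw}$ to $\phi(u)$ rather than invoking $(2,1)$-comprehensiveness, which is cleaner and needs no assumption on $T$, and your explicit padding remark makes precise the paper's implicit assumption that $|V(T)|\geq 3d-1$.
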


\begin{proof}

    Let $T$ be a $(3d-1,1)$-comprehensive graph,
    and suppose the vertices of $G$ with degree at most $3d-1$ form an independent set,
    and suppose that no vertex with degree at most $3d-1$ is a source or a sink.
    The reader can 
    easily verify that if $u \in V(G)$, and $\deg_G(u) > (3d-1)$,
    then $u \in V(\ex_d(G,\sigma))$.
    This is because each time a low degree vertex $v$ is replaced by degree $2$ vertices,
    each neighbour of $v$ will gain at least one degree $2$ neighbour.
   Notice that since no vertex is a source or sink, the minimum degree of $G$ is at least $2$.
   Next, since the vertices of degree at most $3d-1$ form an independent set 
   every vertex $v$ with $2<\deg_G(v) \leq (3d-1)$ in $G$
    will be removed in the process of constructing $\ex_d(G,\sigma)$,
    since their neighbours are not removed, implying their degrees do not increase,
    forcing them to be eventually deleted and replaced by degree $2$ vertices.
    Finally, we note that each added degree $2$ vertex will only be adjacent to vertices with degree at least $3d$,
    so none of the added degree $2$ vertices will increase their degree and subsequently be deleted. 
    Thus, the choice of $\sigma$ has no impact on the resultant $\ex_d(G,\sigma)$.
    Hence, we write $\ex_d(G)$ for $\ex_d(G,\sigma)$ for the remainder of the proof.

    Suppose $G$ has an oriented homomorphism to $T$.
    Let $\phi$ be such a homomorphism.
    We define a new map $\psi$ from $\ex_d(G)$  to $T$.
    Since $\ex_d(G)$ has minimum degree $2$ we can partition the vertices of $\ex_d(G)$
    into two sets $A$ and $B$ where $v \in A$ if $\deg_{\ex_d(G)}(v)>(3d-1)$ and $v \in B$ if $\deg_{\ex_d(G)}(v)=2$.
    From the argument in the previous paragraph every vertex in $A$ is a vertex in $G$,
    moreover, $\ex_d(G)[A]$ is isomorphic to $G[A]$.
    For all $u \in A$ let $\psi(u) = \phi(u)$.
    Since $\phi$ is a homomorphism from $G$ to $T$, $\psi$ as defined is a homomorphism from 
    $\ex_d(G)[A]$ to $T$.
    
    Now, let $v \in B$. Then, Lemma~\ref{Lemma: counter-no-oneway} and the definition of $\ex_d(G)$
    implies that $\deg_{\ex_d(G)}^-(v)=\deg_{\ex_d(G)}^+(v)=1$.
    Suppose without loss of generality that $uvw$ is a $2$-dipath in $\ex_d(G)$.
    Then $u,w\in A$.
    If $v$ is a vertex in $G$, then $\phi(u)\neq \phi(w)$ since all vertices in $A$ are in $G$.
    Hence, $\psi(u)\neq \psi(w)$.
    If $v$ is not a vertex in $G$, then there is a vertex $x$ in $G$ with $\deg_G(x) \leq (3d-1)$
    such that $uxw$ is a $2$-dipath in $G$. So again $\phi(u)\neq \phi(w)$ implying that $\psi(u)\neq \psi(w)$.
    Since $T$ is $(2,1)$-comprehensive $\psi$ can be extended to $v$, while maintain it being an oriented homomorphism.
    Since $v$ was chosen without loss of generality from $B$, the
    map $\psi$ can be extended to a homomorphism from $\ex_d(G)$ to $T$.

    Now suppose that $\ex_d(G)$ has a homomorphism to $T$.
    Let $\psi$ be such a homomorphism.
    We define a new map $\phi$ from $G$ to $T$.
    Define $A$ and $B$ as before.
    Then, letting $\phi(v)= \psi(v)$ for all $v \in A$, the map
    $\phi$ is a homomorphism from $G[A]$ to $T$.
    Now, let $v \in V(G)\setminus A$.
    Then $\deg_G(v)\leq (3d-1)$
    and for all $u \in N^-(v)$ and $w \in N^+(v)$
    there is a degree $2$ vertex $x$ in $\ex_d(G)$ such
    that $uxw$ is a $2$-dipath in $\ex_d(G)$.
    Hence, $\psi(u)\neq \psi(w)$ implying $\phi(u)\neq \phi(w)$.
    Since $T$ is $((3d-1),1)$-comprehensive this implies that
    $\phi$ can be extended to $v$, while maintain it being an oriented homomorphism.
    Since $v$ was chosen without loss of generality from $V(G)\setminus A$, the
    map $\phi$ can be extended to a homomorphism from $G$ to $T$.
    This completes the proof.
\end{proof}

\begin{lemma}\label{Lemma: The Fun Bit}
    Let $G$ be a smallest counterexample to Theorem~\ref{Thm: Critical Upper}, let $\sigma$ be a vertex ordering of $G$,
    let $A$ be the set of vertices with degree at least $3d$ in $\ex_d(G,\sigma)$, and let $B$ be the set of all vertices of degree at most $2$ in $\ex_d(G)$.
    Then 
    for all vertices $v \in A$, if $v$ has $k\leq 3d-1$ neighbours in $A$, 
    then 
    there are at least
    \[
    (\coe\sqrt{n})2^{-k+(3d-1)}.
    \]
    distinct vertices $w\in A$ such that $N(v)\cap N(w)\cap B \neq \emptyset$.
\end{lemma}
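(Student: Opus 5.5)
We argue by contradiction through a minimal-counterexample reduction, in the same style as Lemmas~\ref{Lemma: counter-no-oneway} and~\ref{Lemma: counter-low-neighbours}. Fix a smallest counterexample $G$ and a $\bound$-comprehensive $T$ with no homomorphism from $G$ to $T$. Write $s = \coe\sqrt{n}$.

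By Lemmas~\ref{Lemma: counter-no-oneway} and~\ref{Lemma: counter-low-neighbours}, the vertices of degree at most $3d-1$ in $G$ form an independent set and none is a source or sink. Lemma~\ref{Lemma: extendedEquiv} then applies, so $\ex_d(G)$ has no homomorphism to $T$. Moreover, $A$ is exactly the set of vertices of $G$ with degree at least $3d$ (indeed at least $3d+1$ once they have low-degree neighbours), $\ex_d(G)[A]=G[A]$, and every vertex of $B$ is a $2$-dipath $uxw$ with $u,w\in A$.

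Now suppose some $v\in A$ has $k\le 3d-1$ neighbours in $A$, but fewer than $s2^{-k+(3d-1)}$ vertices $w\in A$ share a $B$-neighbour with $v$. We build a smaller graph $H$ from $G$ by deleting $v$ and, for every pair of $A$-vertices $u\in N^-(v)\cap A$ and $w\in N^+(v)\cap A$, adding a new degree-$2$ vertex forming a $2$-dipath $u\,x\,w$. This preserves the constraint that $\phi(u)\neq\phi(w)$. Equivalently, we work in $\ex_d(G)$, delete $v$ together with its $B$-neighbours, and keep only the $A$-neighbours. We must check that $H$ is $d$-degenerate with fewer vertices, or at least is smaller in the paper's ordering. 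Since $k\le 3d-1$, at most about $(3d/2)^2$ new degree-$2$ vertices are added, and the $B$-neighbours of $v$ are removed. This is a plausible source of the factor $(1+9d^2/4)^{1/2}$ in the constant. Still, the vertex count may not decrease, so the cleaner choice is to apply minimality to $\ex_d(G)-v$ pulled back to a subgraph of $G$: delete $v$ and the low-degree vertices adjacent to it, and add edges or dipaths only among $A$. This gives a homomorphism $\phi$ of everything except $v$ and its $B$-neighbours.

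The extension step is then the heart of the argument. We must choose $\phi(v)=y$ so that:
\begin{itemize}
\item $F(A_v,y,T)$ matches the orientations toward the $k$ images $A_v$ of the $A$-neighbours of $v$;
\item $y$ avoids every colour $\phi(w)$ for which some $B$-vertex $x$ forms a $2$-dipath $v\,x\,w$ or $w\,x\,v$;
\item $y\neq\phi(w)$ for the same-direction cases.
\end{itemize}
In short, $y$ must avoid the colours of all $w\in A$ sharing a $B$-neighbour with $v$, of which there are fewer than $s2^{(3d-1)-k}$. By Lemma~\ref{Lemma: comp->} applied $(3d-1)-k$ times, $T$ is $(k,s2^{(3d-1)-k})$-comprehensive. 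So there are at least $s2^{(3d-1)-k}$ candidates $y$, which exceeds the number of forbidden colours, and a valid $y$ exists. Each $B$-neighbour $x$ of $v$ now has two distinct endpoint colours, so it extends by $(2,1)$-comprehensiveness, just as in Lemma~\ref{Lemma: extendedEquiv}. This yields a homomorphism of $\ex_d(G)$, and hence of $G$, to $T$, a contradiction.

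The main obstacle is the reduction step: making sure the graph on which we invoke minimality is genuinely smaller and still an orientation of a $d$-degenerate graph (so the hypothesis of Theorem~\ref{Thm: Critical Upper} applies), while recording all $\neq$ constraints among $A$-vertices. The first thing to try is deleting $v$ directly from $G$ together with its low-degree neighbours. Each of those low-degree neighbours then needs re-extension at the end, which works via $(3d-1,1)$-comprehensiveness exactly as in Lemma~\ref{Lemma: extendedEquiv}, provided $y$ also avoids the colours forbidden through them. Those forbidden colours are precisely the $w$ counted in the statement.
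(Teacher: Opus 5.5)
Your extension step matches the paper's. That step chooses $y$ among the at least $s2^{(3d-1)-k}$ candidates from Lemma~\ref{Lemma: comp->}, avoids the fewer than $s2^{(3d-1)-k}$ colours of vertices sharing a $B$-neighbour with $v$, and then finishes the $B$-neighbours by $(2,1)$-comprehensiveness. The gap is the reduction. You flag it as the main obstacle but do not resolve it, and none of your candidates works.

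\begin{itemize}
\item \emph{Deleting $v$ and adding gadgets.} Adding a $2$-dipath gadget for each pair $u\in N^-(v)\cap A$, $w\in N^+(v)\cap A$ yields $n-1+|N^-(v)\cap A|\,|N^+(v)\cap A|$ vertices. This is at least $n$ once $v$ has both an in- and an out-neighbour in $A$, and the result is not a subgraph of $G$, so minimality does not apply. Likewise $\ex_d(G)-v$ is in general larger than $G$.
\item \emph{Deleting $v$ together with its low-degree neighbours.} This does give a smaller graph, but the resulting $\phi$ forgets every constraint enforced only through the deleted vertices. Two $A$-neighbours with $u\to v\to w$ may get the same colour; then the vector prescribing $F(A_v,y,T)$ is not well defined and no $y$ exists. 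For a deleted low-degree neighbour $z$, two other neighbours $a\to z\to b$ with $a,b\neq v$ may get the same colour, so $z$ cannot be re-extended however $y$ is chosen. Avoiding the colours of the $w$ counted in the statement handles only the pairs through $z$ that involve $v$.
\item \emph{Adding edges among $A$} to record the lost constraints can destroy $d$-degeneracy, so that does not rescue it either.
\end{itemize}

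The missing idea is to delete an \emph{edge}, not a vertex. Since $v\in A$ has only $k\le 3d-1$ neighbours in $A$, it has a neighbour in $B$, hence a neighbour $z$ with $\deg_G(z)\le 3d-1$. The paper takes $H=G-vz$. This is a strict spanning subgraph of $G$, so it is smaller in the paper's order and still $d$-degenerate, and it has a homomorphism $\phi$ to $T$.

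\begin{itemize}
\item Because $v$ survives in $H$ (with degree at least $3d$, by Lemma~\ref{Lemma: counter-low-neighbours}), $\phi$ respects every $2$-dipath through $v$ between $A$-vertices. This is exactly what makes the vector for $v$ well defined.
\item Because every low-degree vertex survives with all edges except $vz$, every constraint not involving $v$ is kept.
\end{itemize}

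The paper then passes to $\ex_d(H)$ via Lemma~\ref{Lemma: extendedEquiv}, keeps the colours of everything except $v$ and $N(v)\cap B$, and runs your extension step. It concludes that $v$ has no $B$-neighbour, contradicting $\deg_{\ex_d(G)}(v)\ge 3d$.

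You can also stay inside $G$: delete all edges from $v$ to its low-degree neighbours, recolour $v$ as above, then recolour each low-degree neighbour using $(3d-1,1)$-comprehensiveness. This avoids re-checking the hypotheses of Lemma~\ref{Lemma: extendedEquiv} for $H$.
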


\begin{proof}
    Let $G$ be a smallest counterexample to Theorem~\ref{Thm: Critical Upper}.
    Then, as in the proof of Lemma~\ref{Lemma: extendedEquiv} the choice of $\sigma$ has no impact on the structure of $\ex_d(G,\sigma)$
    so we write $\ex_d(G)$ for $\ex_d(G,\sigma)$.
    Since, $G$ is a smallest counterexample,
    there exists a $\bound$-comprehensive graph $T$ where $G$ has no homomorphism to $T$.
    Let $T$ be such a graph.
    Let $A$ be the set of vertices with degree at least $3d$ in $\ex_d(G)$, and let $B$ be the set of all vertices with degree $2$ in $\ex_d(G)$.
    Given Lemma~\ref{Lemma: counter-no-oneway} this partitions the vertices of $\ex_d(G)$.
    Since $G$ is $d$-degenerate $B\neq \emptyset$.
    Similarly, since $G$ is $d$-degenerate
    Lemma~\ref{Lemma: counter-low-neighbours} implies there exists a vertex with degree at least $3d+1$ in $G$, 
    hence $A\neq \emptyset$.

    Suppose there exists a vertex $v\in A$ with $k\leq 3d-1$ neighbours in $A$
    and there exists less than 
    \[
    (\coe\sqrt{n})2^{-k+(3d-1)}.
    \]
    distinct vertices $w\in A$ such that $N(v)\cap N(w)\cap B \neq \emptyset$.
    We claim that $v$ has no neighbour in $B$.
    For contradiction suppose otherwise.
    Assume then without loss of generality that $N^+(v)\cap B\neq \emptyset$.

    Since $N^+(v)\cap B\neq \emptyset$ there exists a vertex $z$ in $G$
    such that $\deg_G(z) \leq 3d-1$ and $(v,z)$ is an arc.
    Define $H = G-vz$. 
    Notice that since $G$ is a smallest counterexample and $v$ is adjacent to $z$, Lemma~\ref{Lemma: counter-low-neighbours} implies that $\deg_G(v)\geq 3d+1$. Hence, $\deg_H(v)\geq 3d$
    implying that $v$ is a vertex in $\ex_d(H)$.
    So, for all vertices $u$ in $H$ with degree at most $3d-1$, if $w$ is a neighbour of $u$, then $w$ has degree greater than $3d-1$.
    Since $G$ is a smallest counterexample and the number of vertices in $H$ is equal to $n$, $H$ has an oriented homomorphism to $T$.
    Thus, by Lemma~\ref{Lemma: extendedEquiv} $\ex_d(H)$ has an oriented homomorphism to $T$.
    Let $\phi$ be such an homomorphism from $\ex_d(H)$ to $T$.

    We define a new map $\psi: V(\ex_d(G)) \rightarrow V(T)$, which we claim will be a homomorphism.
    Notice that $\ex_d(G) - (N(v)\cap B)$ is an induced subgraph of $\ex_d(H)$.
    For all vertices $x \notin (N(v)\cap B) \cup \{v\}$
    let $\psi(x) = \phi(x)$.
    Since $\ex_d(G) - (N(v)\cap B)$ is an induced subgraph of $\ex_d(H)$
    and since we have not coloured $v$, this is a partial homomorphism from $\ex_d(G)$ to $T$.
    We extend $\psi$ to a full homomorphism as follows.

    Observe that since $\phi$ is a homomorphism from $\ex_d(H)$ to $T$,
    for all $2$-dipaths $uvw$, where $u,w\in A$, $\psi(u)=\phi(u)\neq \phi(w)=\psi(w)$. Hence, 
    letting $C = \{c_1,\dots, c_{|C|}\}$ be an ordering of $\{\psi(w): w \in N(v)\cap A\}$ the vector $\vec{v}$ defined by let $\vec{v} \in \{-1,1\}^{|C|}$ with $i^{th}$ entry $1$ if $(w,v)\in E$, where $\psi(w) = c_i$, and $i^{th}$ entry $-1$ otherwise, is well defined.
    
    Recall that $|C|\leq k\leq 3d-1$ by assumption, and recall every neighbour of $v$ in $B$ is uncoloured.
    Since $T$ is $\bound$-comprehensive,
    Lemma~\ref{Lemma: comp->} implies $T$ is $(k,(\coe\sqrt{n}) 2^{-k+(3d-1)})$-comprehensive.
    Thus, there exists 
    at least $(\coe\sqrt{n})2^{-k+(3d-1)}$ vertices $q$ in $T$ such that $\vec{v} = F(C,q,T)$.
    Since there are less than  
    \[
    (\coe\sqrt{n})2^{-k+(3d-1)}
    \]
     distinct vertices $w\in A$ such that $N(v)\cap N(w)\cap B \neq \emptyset$,
    this implies there exists a vertex $q$ in $T$ with 
    $\vec{v} = F(C,q,T)$
    and $q \neq \psi(w)$ for all vertices $w \in A$ such that $v$ and $w$ have a common neighbour in $B$.
    Let $\psi(v)=q$ for such a $q$.
    
    Now, all remaining uncoloured vertices of $\ex_d(G)$ are elements of $B$ adjacent to $v$.
    By our choice of $\psi(v)$, for all uncoloured $u$, both neighbours of $u$ have distinct colours.
    Since $T$ is $(2,1)$-comprehensive, $\psi$ can trivially be extended to a full homomorphism from $\ex_d(G)$ to $T$.
    By Lemma~\ref{Lemma: extendedEquiv} this implies $G$ has a homomorphism to $T$ a contradiction.
    Suppose then that $v$ has
    no neighbours in $B$.
    But $N(v)\cap B=\emptyset$ implies that $\deg_{\ex_d(G)}(v)=k\leq (3d-1)$ contradicting that $v \in A$, since all vertices in $A$ have degree at least $3d$.
    This completes the proof.
\end{proof}

We are now prepared to prove Theorem~\ref{Thm: Critical Upper}.

\begin{proof}[Proof of Theorem~\ref{Thm: Critical Upper}]

We will prove the theorem by induction of $n$.
If $n\leq 1$, then trivially the result holds.
Suppose then that $n>1$ is the least integer such that
there exists a counterexample $G$ to the theorem with $n(G) = n$.
Let $G$ be such a counterexample.
Let $T$ be a $\bound$-comprehensive graph
such that $G$ has no homomorphism to $T$.
Notice $T$ exists since $G$ is a counterexample.

As in Lemma~\ref{Lemma: extendedEquiv} and Lemma~\ref{Lemma: The Fun Bit}
we write $\ex_d(G)$ for $\ex_d(G,\sigma)$ since the choice of $\sigma$ does not matter.
Let $A$ be the set of vertices in $\ex_d(G)$ with degree greater than $(3d-1)$,
and let $B$ be the set of degree at most $2$ vertices in $\ex_d(G)$.
By Lemma~\ref{Lemma: counter-no-oneway} every vertex in $B$ has degree $2$.
Next, 
by Lemma~\ref{Lemma: The Fun Bit} for every vertex $v \in A$ with $k \leq (3d-1)$ neighbours in 
$A$
there are at least
    \[
    (\coe\sqrt{n})2^{-k+(3d-1)}.
    \]
distinct vertices $w\in A$ such that $N(v)\cap N(w)\cap B \neq \emptyset$.
We note that since $G[A]$ and $\ex_d(G)[A]$ are isomorphic,
and since $G$ is $d$-degenerate there exists a vertex in $A$ with at most $d$ neighbours in $A$.
Let $N = |A|$.

Now define $H$ from $\ex_d(G)$ by deleting vertices from $B$ until there are no twin vertices in $B$.
That is until there are no vertices $u,v \in B$ with the same in-neighbours and out-neighbours.
Trivially, $H$ has an oriented homomorphism to $T$ if and only if $\ex_d(G)$ does.
Hence, $H$ has no oriented homomorphism to $T$.
Going forward we will only consider $H$, notice that $A$ is unchanged.
We let $B'$ denote the subset of $B$ in $H$.

Since $B$ is an independent set in $\ex_d(G)$, $B'$ is an independent set in $H$.
Thus, all vertices in $B'$ are degree $2$, with their neighbours in $A$, and $B'$ contains no twins.
Hence, 
\begin{align*}
    n(H) & \leq N +2\binom{N}{2}\\
    & = N^2
\end{align*}
implying $\sqrt{n(H)} \leq N$.
Similarly, notice that each vertex $v$ of degree at most $3d-1$ which is 
removed from $G$ and replaced by degree $2$ vertices in $\ex_d(G)$, 
adds exactly $\deg^-(v)\deg^+$ degree $2$ vertices.
Elementary calculus implies $\deg^-(v)\deg^+\leq \frac{deg(v)^2}{4} < \frac{9d^2}{4}$. 
Since at most $n$ vertices are deleted and replaced with degree $2$ vertices 
during the construction of $\ex_d(G)$ we observe that
\begin{align*}
    n(H) & \leq |V(\ex_d(G))| \\
    & \leq n + \frac{9d^2}{4}n
\end{align*}
implying that $n(H) \leq \Big(1+\frac{9d^2}{4}\Big)n$.
Observe that since $G$ is $d$-degenerate, $\ex_d(G)$ is $d$-degenerate, implying $H$ is $d$-degenerate, 
since $H$ is a subgraph of $\ex_d(G)$.

Since $H$ is $d$-degenerate Lemma~\ref{Lemma: MAD} implies
$\mad(H) < 2d$. So the average degree of $H[A]$ is strictly less than $2d$.
Let $X$ and $Y$ be a partition of $A$ such that $v \in X$
if and only if $\deg_{H[A]}(v)\geq 3d$.
We observe that
\begin{align*}
    2d &> \frac{\sum_{v \in A} \deg_{H[A]}(v)}{N} \\
    & \geq \frac{3d|X|}{N}
\end{align*}
implies that $|X| < \frac{2N}{3}$. From which it follows that 
\[
|Y| > \Big(1-\frac{2}{3}\Big)N\geq  \frac{1}{3}\sqrt{n(H)}.
\]
Now
by Lemma~\ref{Lemma: The Fun Bit} and the definition of $H$ for all vertices $v \in A$ with $\deg_{H[A]}(v) \leq (3d-1)$,
\[
\deg_H(v) \geq  \deg_{H[A]}(v) + \Big( \coe \sqrt{n}\Big)2^{-\deg_{H[A]}(v)+(3d-1)}.
\]
Observe that this function is monotone decreasing for values of $\deg_{H[A]}$ at most $3d-1$.
From here we consider the average degree of the entire graph $H$,
\begin{align*}
    2d &> \frac{\sum_{v \in V(H)} \deg_{H}(v)}{n(H)} \\
    \\
    & \geq \frac{\sum_{v \in Y} \deg_{H}(v)}{n(H)} \\
    \\
    & \geq \frac{1}{n(H)} \Bigg(\sum_{v\in Y} \Big(\deg_{H[A]}(v) + \Big(\coe\sqrt{n}\Big)2^{-\deg_{H[A]}(v)+(3d-1)}\Big)\Bigg) \\
    \\
    &\geq \frac{|Y|}{n(H)}\Bigg(\coe\sqrt{n} \Bigg)  \\
    \\ 
    &>\frac{1}{3\sqrt{n(H)}}\Bigg(\coe\sqrt{n} \Bigg) \\
    \\
    & \geq \frac{\coe }{3} \Bigg(1+\frac{9d^2}{4}\Bigg)^{-\frac{1}{2}}\\
    \\
    & = 2d
\end{align*}
a contradiction. But we have made no assumption except that $G$ is a smallest counterexample.
So there is no counterexample, completing the proof.
\end{proof}

\subsection{Proof of Chromatic Bound}

With Theorem~\ref{Thm: Critical Upper} proven,
we must now prove there exists a target graphs $T$ of small order.
Combining the existence of a smaller order target graph $T$ with Theorem~\ref{Thm: Critical Upper}
immediately provides a proof of Theorem~\ref{Thm: main}.
To accomplish this we proceed by a basic application of the probabilistic method,
similarly to that which appears in \cite{CLOW2026}.

\begin{lemma}\label{Lemma: Chernoff's Bound}[\cite{motwani1995randomized} Theorem~4.2]
    Let $X$ be a random variable with binomial distribution $Bin(n,p)$ where $0<p<1$ and $0 < \delta \leq 1$. Then, $$\mathbb{P}(X < (1-\delta)\mu) \leq \exp(-\mu\delta^2/2)$$
    where $\mu := \mathbb{E}(X)$.
\end{lemma}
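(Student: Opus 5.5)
This lemma is the standard lower-tail Chernoff bound, quoted from \cite{motwani1995randomized}. I would prove it directly by the exponential moment method.

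First I dispose of the boundary case $\delta = 1$: then $\mathbb{P}(X < 0) = 0 \leq \exp(-\mu/2)$, since $X \geq 0$. So assume $0<\delta<1$.

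Write $X = \sum_{i=1}^n X_i$ with the $X_i$ independent Bernoulli$(p)$ variables, so $\mu = np$. Fix $t>0$. Since $x\mapsto e^{-tx}$ is decreasing, Markov's inequality gives
\[
\mathbb{P}(X < (1-\delta)\mu) \leq \mathbb{P}\big(e^{-tX} \geq e^{-t(1-\delta)\mu}\big) \leq e^{t(1-\delta)\mu}\,\mathbb{E}\big(e^{-tX}\big).
\]
By independence, $\mathbb{E}(e^{-tX}) = (1 - p + pe^{-t})^n$. Applying $1+x\leq e^x$ with $x = p(e^{-t}-1)$ bounds this by $\exp(\mu(e^{-t}-1))$.

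Next I choose $t = -\ln(1-\delta) > 0$, so that $e^{-t} = 1-\delta$. The bound then becomes
\[
\exp\big(-\mu\delta - \mu(1-\delta)\ln(1-\delta)\big) = \Big(\frac{e^{-\delta}}{(1-\delta)^{1-\delta}}\Big)^{\mu}.
\]

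It remains to show $\delta + (1-\delta)\ln(1-\delta) \geq \delta^2/2$ for $\delta\in(0,1)$; this is the only non-mechanical step, though it is elementary. Let
\[
f(\delta) = (1-\delta)\ln(1-\delta) + \delta - \delta^2/2 .
\]
Then $f(0)=0$, and
\[
f'(\delta) = -\ln(1-\delta) - 1 + 1 - \delta = -\ln(1-\delta) - \delta .
\]
This is nonnegative because $-\ln(1-\delta) = \sum_{j\geq 1}\delta^j/j \geq \delta$. Hence $f\geq 0$ on $[0,1)$, and substituting into the exponent gives
\[
\mathbb{P}(X < (1-\delta)\mu) \leq \exp(-\mu\delta^2/2),
\]
as claimed.
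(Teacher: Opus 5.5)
Your proof is correct, and it is the standard exponential-moment argument: Markov's inequality applied to $e^{-tX}$, the bound $1+x\le e^x$, the choice $t=-\ln(1-\delta)$, and the elementary inequality $\delta+(1-\delta)\ln(1-\delta)\ge \delta^2/2$. This is exactly how Theorem~4.2 of Motwani--Raghavan is proved; the paper only cites that theorem and does not reproduce a proof. Your separate treatment of $\delta=1$, where $t=-\ln(1-\delta)$ is undefined, is a small point of extra care the usual textbook argument glosses over.
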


\begin{lemma}\label{Lemma: exists comprehensive}
    Let $k\geq 2$ be a constant integer.
    For $t\geq 14k^2$,
    there exists a $(k,t)$-comprehensive graph with order at most 
    $2t2^k$.
\end{lemma}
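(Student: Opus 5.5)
We take a uniformly random tournament $T$ on $m = \lfloor 2t2^k \rfloor$ vertices: each pair $\{x,y\}$ is oriented as $(x,y)$ or $(y,x)$ independently with probability $\frac12$. We then show that with positive probability $T$ is $(k,t)$-comprehensive. Since the conclusion only needs order at most $2t2^k$, we may take exactly this $m$.

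First fix a $k$-set $U\subseteq V(T)$ and a vector $\vec a\in\{-1,1\}^k$. For each $z\in V(T)\setminus U$, the event $F(U,z,T)=\vec a$ depends only on the $k$ edges between $z$ and $U$. It therefore has probability $2^{-k}$, and these events are independent over distinct $z$ because they involve disjoint edge sets. Hence the number $X_{U,\vec a}$ of such $z$ is distributed as $Bin(m-k,2^{-k})$, with mean
\[
\mu=(m-k)2^{-k}\ge 2t - (k+1)2^{-k} \ge 2t-1 .
\]
Apply Lemma~\ref{Lemma: Chernoff's Bound} with $\delta=\frac12$ (the routine check is that $(1-\delta)\mu\ge t$ up to the harmless $-1$; otherwise take $\delta$ slightly below $\frac12$). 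This gives
\[
\mathbb P(X_{U,\vec a}<t)\le \exp(-\mu/8)\le \exp(-(2t-1)/8).
\]

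Next we take a union bound over all pairs $(U,\vec a)$. There are at most $\binom{m}{k}2^k\le m^k2^k=(2t)^k4^{k^2}$ of them. It therefore suffices to verify
\[
k\ln(2t)+k^2\ln 4 < \frac{2t-1}{8}.
\]
This is where the hypothesis $t\ge 14k^2$ is used, and it is the only step that needs care. With $t\ge14k^2$ the right side is at least $3.5k^2-\tfrac18$, which dominates $k^2\ln4\approx1.39k^2$. The remaining term $k\ln(2t)$ grows only logarithmically in $t$ while the right side grows linearly, so one checks that the function $\frac{t}{4}-k\ln(2t)$ is increasing for $t\ge 14k^2$ and then verifies the inequality at $t=14k^2$ using $k\ge2$. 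For instance, at $k=2$, $t=56$ we get $2\ln112+4\ln4\approx 9.4+5.5=14.9 < 111/8\approx13.9$. This fails, so the crude bound $\binom mk\le m^k$ is too lossy.

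To repair this, I would use the sharper bound $\binom{m}{k}\le (em/k)^k$, or apply Chernoff with a larger $\delta$ such as $\delta=1-\frac{t}{\mu}\approx\frac12$ kept exact, together with a more careful mean $\mu\approx 2t$, giving exponent $t/4$. Then at $k=2$, $t=56$: $\ln\binom{m}{2}+2\ln2\le \ln(448^2/2)+1.4\approx 11.5+1.4=12.9<14$, which works. In general, $\ln\binom mk 2^k\le k\ln(e\cdot 2t\cdot 2^k/k)+k\ln2 = O(k^2)+k\ln t$ against $t/4\ge 3.5k^2$, and the same monotonicity check completes the verification. Once the expected number of failing pairs $(U,\vec a)$ is below $1$, some tournament $T$ has $X_{U,\vec a}\ge t$ for every pair, so $T$ is $(k,t)$-comprehensive with order at most $2t2^k$. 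The main obstacle is precisely this constant-chasing at the threshold $t=14k^2$, where the union bound must be taken with the $(em/k)^k$ estimate rather than $m^k$.
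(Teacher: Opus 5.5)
Your proposal is correct and follows the paper's proof. Both use a uniformly random tournament on $2t2^k$ vertices, note that $X_{U,\vec a}\sim Bin(m-k,2^{-k})$, apply Chernoff with $\delta=1-t/\mu$ to get an exponent of roughly $t/4$, and take a union bound over the $\binom{m}{k}2^k$ pairs, checked at $t=14k^2$; the paper only asserts this last check.

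One bookkeeping correction, since your diagnosis of the failed first attempt is off:
\begin{itemize}
\item The failure did not come from $\binom{m}{k}\le m^k$. It came from relaxing $2^{k^2+k}$ to $4^{k^2}$: at $k=2$, $t=56$ the honest bound gives $\ln(m^k2^k)\approx 13.6$, which already beats the true exponent $(\mu-t)^2/(2\mu)\approx 13.8$.
\item The $(em/k)^k$ estimate you propose as the repair gives about $14.2$ at $k=2$, $t=56$, so it fails there and only works for $k\ge 3$. The $k=2$ case therefore rests on $\binom{m}{2}\le m^2/2$, exactly as in your numerical check.
\end{itemize}
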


\begin{proof}
    Let $T = (V,E)$ be a random orientation of the complete graph on $n$ vertices such that each edge is assigned an orientation uniformly and independently. We will choose the value of $n$ later. Let ${\vec a} \in \{-1,1\}^{k}$ and $U \subset V$ such that $|U| = k$ be fixed but arbitrary. Let $X_{U,{\vec a}}$ be the random variable which counts the number of vertices $z \in V\setminus U$ such that $F(U,z,T) = {\vec a}$.
    One can easily verify that $X_{U,{\vec a}}$ has binomial distribution $Bin((n-k),2^{-k})$.
    Hence,
    \[
    \mathbb{E}(X_{U,{\vec a}}) = (n-k)2^{-k}.
    \]
    Now, letting $\delta >0$ be a a constant to be chosen later, notice that Lemma~\ref{Lemma: Chernoff's Bound} implies
    \begin{align*}
        \mathbb{P}\Big(X_{U,{\vec a}} < (1-\delta)(n-k)2^{-k}\Big) & \leq \exp\Bigg(\frac{-(n-k)2^{-k}\delta^2}{2}\Bigg).
    \end{align*}
    If for all $U$ and all $\vec{a}$, $X_{U,\vec{a}}\geq t$, then $T$ is $(k,t)$-comprehensive.
    Hence, by letting $\delta = 1 - \frac{t}{(n-k)2^{-k}}$ for a fixed but arbitrary $U$ and all $\vec{a}$,
    we can bound the probability $X_{U,\vec{a}} < t$ by 
    \begin{align*}
        \mathbb{P}\Big(X_{U,{\vec a}} < t\Big) & \leq \exp\Bigg(\frac{-(n-k)2^{-k}\delta^2}{2}\Bigg) \\
        & = \exp\Bigg( \frac{-1}{2}\Big((n-k)2^{-k} - 2t + \frac{t^2}{(n-k)2^{-k}}\Big) \Bigg)
    \end{align*}
    Letting $n = 2t2^k$ this gives that
    \[
    \mathbb{P}\Big(X_{U,{\vec a}} < t\Big)  < \exp\Big( -\Big(\frac{1}{4} - o(1)\Big)t \Big)
    \]
    Now applying the union bound over all choices of $U$ and $\vec{a}$ gives,
    \begin{align*}
        \mathbb{P}(\text{$T$ is not $(k,t)$-comprehensive}) & \leq \binom{n}{k}2^k\exp\Big( -\Big(\frac{1}{4} - o(1)\Big)t \Big) \\
        & \leq n^k \exp\Big( -\Big(\frac{1}{4} - o(1)\Big)t \Big) \\
        & = t^k 2^{k(k+1)}\exp\Big( -\Big(\frac{1}{4} - o(1)\Big)t \Big).
    \end{align*}
    Since $k$ is constant and $t$ is growing, for all sufficiently large $t$, the probability that $T$ is 
    $(k,t)$-comprehensive is positive.
    A careful analysis of the inequalities here shows that $t = 14k^2$ is sufficient for all $k\geq 2$.
    This completes the proof.
\end{proof}

\begin{proof}[Proof of Theorem~\ref{Thm: main}]
    Let $G$ be a fixed but arbitrary $n$ vertex $d$-degenerate graph.
    By Lemma~\ref{Lemma: exists comprehensive} 
    if $\coe \sqrt{n} \geq 14(3d-1)^2$, then
    there exists a $\bound$-comprehensive
    graph $T$ with order at most 
    \[
    2\Bigg( \coe \sqrt{n} 2^{3d-1}\Bigg) = \coe8^{d}\sqrt{n}.
    \]
    The reader can verify that if $n\geq 196$, the
    $\coe \sqrt{n} \geq 14(3d-1)^2$
    by maximizing 
    \[
    \Bigg(\frac{14(3d-1)^2}{\coe }\Bigg)^2
    \] 
    over all $d\geq 2$.
    If $n< 196$, the claimed upper bound is greater than $n$,
    hence the result is trivial since $\chi_o(G) \leq n$.
    So suppose $n\geq 196$, hence $T$ exists on the desired number of vertices.
    Now Theorem~\ref{Thm: Critical Upper} implies $G$ has a homomorphism to $T$,
    implying
    \[
    \chi_o(G) \leq |V(T)|\leq \coe8^{d}\sqrt{n}.
    \]
    This completes the proof.
\end{proof}

\section{Future Work}
\label{sec: future}

We conclude with some open problems.
The most obvious problem for future work is to ask if the coefficient in terms of $d$
in Theorem~\ref{Thm: main} can be improved.
Notice that at a variety of points in the argument appearing in Section~\ref{sec: Upper}
non-optimal bounds are taken in order to simplify strings of inequalities.
It is not obvious if a more careful analysis here would lead to substantial improvements 
of the constants as $d$ grows (while remaining a fixed constant).
Substantial here meaning a bound of the form $2^{(1+o(1))d}\sqrt{n}$.
For $d=2$, that is maximum average degree less than $4$, it seems likely that better choices can be made to reduce the constant substantially,
but it is not clear that this sort of optimization will
lead to a upper bound that matches the lower bound provided by $1$-subdivisions.

\begin{question}\label{Q: c_r}
   Let $r\geq 4$ be constant.
   What is the largest constant $c_r$ such that for infinitely many 
   choices of $n$, there exists a $n$ vertex 
   graph $G$ with $\mad(G) < r$ and $\chi_o(G)\geq c_r \sqrt{n}$.
\end{question}

Notice that we state Question~\ref{Q: c_r} here in terms of maximum average degree
rather than degeneracy.
This is because the $2d > 2d$ contradiction in the proof of Theorem~\ref{Thm: Critical Upper}
can be replaced with an $r > r$ contradiction, without any further modifications to the argument.
This would potentially lead to a small refinement in the constant 
in Theorem~\ref{Thm: main} where $r < 2d$.
We do not include this in Theorem~\ref{Thm: main}
because having the entire coefficient in terms of degeneracy
allows us to more easily contrast our result with those appearing in the literature.

It is important to note that Question~\ref{Q: c_r}
is not only a question about upper bounds, but also one about lower bounds.
A scan of the oriented colouring literature reveals that there are far more ways to 
upper bound the oriented chromatic number than lower bound it.
Almost every example of an oriented colouring lower bound, see for instance the lower bound
for graphs with bounded degree \cite{kostochka1997acyclic}, graphs with bounded genus \cite{BRADSHAW2025,clow2026oriented},
graphs with bounded treewidth \cite{sopena1997chromatic},
and graphs with at least $n\log(n)$ edges \cite{wood2007oriented}
come from one of two techniques.
Either you construct a oriented clique in your class,
or you apply a basic counting inequality originally observed in \cite{kostochka1997acyclic}.
The notable exception to this are subclasses of planar graphs (including general planar graphs),
see for instance \cite{marshall2007homomorphism, marshall2015oriented,sopena2002there}.

It would be very interesting to construct $2$-degenerate graphs that require substantially more colours
than the $1$-subdivision of cliques.
Particularly, if proving this chromatic lower bound involves 
a novel approach.
Developing such a technique would potentially
be useful in improving lower bounds for well studied classes,
perhaps including planar graphs.

\begin{problem}
    Construct, or existentially prove the existence of, an infinite family of $2$-degenerate 
    graphs such that for all graphs $G$ in this family,
    \[
    \chi_o(G) \geq (1+\epsilon)\sqrt{n}
    \]
    where $n$ is the number of vertices in $G$.
    Of course, once one such family is known, attempt to make $\epsilon$ as large
    as possible.
\end{problem}

It is elementary to observe that a $(k,t)$-comprehensive graph requires at least $t2^k+k$ vertices.
This is because for a fixed set of $k$ vertices, there are $2^k$ possible configurations to another vertex,
and we require at least $t$ vertices that exhibit each of these orientations.
Hence, if one is to use $(d,c\sqrt{n})$-comprehensive graphs as your target for $d$-degenerate graphs
it is impossible to use fewer than $(c+o(1))2^d \sqrt{n}$ colours.
Since there does not seem to be another natural choice of target graph
we conjecture that the exponential term in $d$ here is required.
Again, it would be a major contribution in our estimation 
to prove this using a novel lower bounding technique.

\begin{conjecture}
    There exists a constant $c>0$ such that for all $d\geq 2$,
    there are infinitely many integers $n$, and $d$-degenerate graphs $G$ on $n$ vertices
    with 
    \[
    \chi_o(G) \geq c 2^{d}\sqrt{n}.
    \]
\end{conjecture}

\bibliographystyle{abbrv}
\bibliography{Bib}

\end{document}